\documentclass[letterpaper,12pt]{amsart}
\textwidth=16.00cm 
\textheight=22.00cm 
\topmargin=0.00cm
\oddsidemargin=0.00cm 
\evensidemargin=0.00cm 
\headheight=0cm 
\headsep=0.5cm

\textheight=630pt

\usepackage{latexsym,array,delarray,amsthm,amssymb,epsfig,color}
\usepackage{graphics,graphicx, caption, comment}



\theoremstyle{plain}
\newtheorem{thm}{Theorem}[section]
\newtheorem{lemma}[thm]{Lemma}
\newtheorem{prop}[thm]{Proposition}
\newtheorem{cor}[thm]{Corollary}
\newtheorem{conj}[thm]{Conjecture}
\newtheorem*{thm*}{Theorem \ref{thm:main}}
\newtheorem*{lemma*}{Lemma}
\newtheorem*{prop*}{Proposition}
\newtheorem*{cor*}{Corollary}
\newtheorem*{conj*}{Conjecture}

\theoremstyle{definition}
\newtheorem{defn}[thm]{Definition}
\newtheorem{ex}[thm]{Example}
\newtheorem{pr}[thm]{Problem}

\theoremstyle{remark}
\newtheorem*{rmk}{Remark}

\numberwithin{figure}{section}


\newcommand{\qq}{\mathbb{Q}}
\newcommand{\rr}{\mathbb{R}}
\newcommand{\cc}{\mathbb{C}}
\newcommand{\kk}{\mathbb{K}}


\newcommand{\bfp}{\mathbf{p}}
\newcommand{\bfq}{\mathbf{q}}

\newcommand{\ca}{\mathcal{A}}

\newcommand{\cali}{\mathcal{I}}


\newcommand{\ind}{\mbox{$\perp \kern-5.5pt \perp$}}

\newcommand{\mlt}{\mathrm{mlt}}
\newcommand{\wmlt}{\mathrm{wmlt}}
\newcommand{\rank}{\mathrm{rank}}
\newcommand{\birank}{\mathrm{birank}}
\newcommand{\diag}{\mathrm{diag}}
\newcommand{\smt}{\mathrm{smt}}

\begin{document}

\title[Maximum likelihood threshold]{The maximum likelihood threshold of a graph }
\author{Elizabeth Gross}
\author{Seth Sullivant}

             \email{elizabeth.gross@sjsu.edu}
              \email{smsulli2@ncsu.edu } 
                          \address{Department of Mathematics and Statistics, One Washington Square,  San Jos\'{e} State University, San Jos\'{e}, CA, 95192-0103, USA}
              \address{Department of Mathematics, Box 8205, North Carolina State University, Raleigh, NC, 27695-8205, USA }     
   
           

\maketitle

\begin{abstract} The maximum likelihood threshold of a graph is the smallest number of data points that guarantees that maximum likelihood estimates exist almost surely in the Gaussian graphical model associated to the graph. We show that this graph parameter is connected to the theory of combinatorial rigidity. In particular, if the edge set of a graph $G$ is an independent set in the $n-1$-dimensional generic rigidity matroid, then the maximum likelihood threshold of $G$ is less than or equal to $n$. This connection allows us to prove many results about the maximum likelihood threshold.  We conclude by showing that these methods give exact bounds on the number of observations needed for the score matching estimator to exist with probability one.
\end{abstract}


\section{Introduction}
Let $X=(X_1, \ldots, X_m)$ be a $m$-dimensional random vector distributed according to a multivariate normal distribution, i.e. $X \sim \mathcal N(\mu, \Sigma)$.  In a Gaussian graphical model, an undirected graph $G=(\{1, \ldots, m\}, E)$ encodes the conditional independence structure of the distribution: the edge $(i,j) \notin E$ if and only if $X_i$ and $X_j$ are conditionally independent given the remaining variables.  Originally introduced by Dempster \cite{Dempster1972} under the name of covariance selection models, Gaussian graphical models have found a variety of applications, especially in systems biology and bioinformatics.   For example, these models are used to model gene regulatory networks \cite{Dobra2004, SS2005} and to infer pathways in metabolic networks \cite{KSIAT11}. Lauritzen \cite{Lauritzen1996} and Whittaker \cite{Whittaker1990} both give general introductions to Gaussian graphical models.

In this paper, we are concerned with the existence of the maximum likelihood estimator (MLE) of the covariance matrix when the mean vector $\mu=0$.  
For Gaussian graphical models, when the number of observations $n$ is larger than the number of random variables $m$, the MLE exists with probability one.  But it is often the case, especially in biological applications, that $m \gg n$. In this setting, it is still possible that the MLE exists with probability one, which invites the question: \emph{For a given graph $G$, what is the smallest $n$ such that the maximum likelihood estimator of $\Sigma$ exists with probability one?}  We denote the resulting graph invariant by $\mlt(G)$ and call it the \emph{maximum likelihood
threshold}.

As originally proven in \cite{Dempster1972} and discussed further in \cite{Uhler2012}, the existence of the MLE for given data set and
for a particular Gaussian graphical model is equivalent to the
existence of a 
full rank matrix completion of the incomplete matrix obtained by keeping only the diagonal entries and entries corresponding to $E$ of the sample covariance matrix $\Sigma_0$. Let $\mathbb{S}^{m}$ denote the set of $m \times m$
symmetric matrices, $\mathbb{S}^{m}_{>0}$ the set of $m \times m$ positive definite
symmetric matrices, and $\mathbb{S}^{m}_{\ge 0}$ the set of $m \times m$ positive
semidefinite symmetric matrices.   Let $Sym(m,n)$ denote the set of $m \times m$
symmetric matrices of rank $\leq n$.  Let 
\begin{equation}\label{eq:phiG}
\phi_{G} :  \mathbb{S}^{m}  \rightarrow \rr^{V + E},  \quad \quad
\phi_{G}( \Sigma)  =  (\sigma_{ii})_{i \in V}  \oplus (\sigma_{ij})_{ij \in E} 
\end{equation}
be the coordinate projection that extracts the diagonal and entries corresponding
to edges of $G$ of 
the symmetric matrix $\Sigma = (\sigma_{ij})_{i,j \in V}$.  In the setting of matrix completion problems,  the question of determining the maximum likelihood threshold of a graph $G$ is:

\begin{pr}[Maximum Likelihood Threshold]\label{pr:main}
Given a graph $G$, what is the smallest $n$ such that for almost all
$\Sigma_{0}  \in Sym(m,n) \cap \mathbb{S}^{m}_{\ge 0}$
there exists a $\Sigma \in \mathbb{S}^{m}_{>0}$ such that
$\phi_{G}(\Sigma_{0})  =  \phi_{G}(\Sigma)$?  
\end{pr}


Since every positive semi-definite matrix of rank $n$ arises as 
$P^{T}P$ for some $n \times m$ real matrix $P$ with real columns
$\bfp_{i}  \in \rr^{n}$,   problem \ref{pr:main} is equivalent to asking: \emph{Given a graph $G$, what is the smallest $n$ such that for almost
all $P =  ( \bfp_{1}, \ldots, \bfp_{m}) \in \rr^{n \times m}$
 there exists a set of linearly independent vectors
$Q  =  (\bfq_{1}, \ldots,  \bfq_{m}) \in \rr^{m \times m}$
such that} 
$$
\| \bfp_{i} \|_{2}  =  \| \bfq_{i} \|_{2} \mbox{ for all } i, \mbox {and }
\bfp_{i} \cdot \bfp_{j}  = \bfq_{i} \cdot \bfq_{j}  \mbox{ for all } ij \in E?
$$
This formulation results in a natural connection between the \emph{symmetric minor matroid}  and the \emph{generic rigidity matroid}, which we will use to bound the maximum likelihood threshold in our main result:

\begin{thm}\label{thm:mainintro}
If the edge set of a graph $G$ is an independent set in the $n-1$-dimensional generic rigidity matroid, then the maximum likelihood threshold of $G$ is less than or equal to $n$.
\end{thm}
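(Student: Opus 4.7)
The plan is to reformulate the matrix completion question via the vector formulation stated just before the theorem and solve it by an implicit function theorem argument, in which the rigidity-matroid hypothesis enters precisely as the surjectivity of a certain Jacobian. The rigidity-theoretic input will be the coning theorem.

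Given a generic $P = (\bfp_1, \ldots, \bfp_m) \in \rr^{n \times m}$, I must produce a full-rank $Q = (\bfq_1, \ldots, \bfq_m) \in \rr^{m \times m}$ with $\|\bfp_i\|_2 = \|\bfq_i\|_2$ for $i \in V$ and $\bfp_i \cdot \bfp_j = \bfq_i \cdot \bfq_j$ for $ij \in E$. I would parametrize $Q$ by writing $\bfq_i = (\tilde\bfp_i, \bfu_i)$ with $\tilde\bfp_i \in \rr^n$ and $\bfu_i \in \rr^{m-n}$, which reduces the task to steering the map
\[
\Phi(\tilde\bfp, \bfu) \;=\; \bigl(\|\tilde\bfp_i\|^2 + \|\bfu_i\|^2\bigr)_{i \in V} \,\oplus\, \bigl(\tilde\bfp_i \cdot \tilde\bfp_j + \bfu_i \cdot \bfu_j\bigr)_{ij \in E}
\]
to the value $\phi_G(P^T P)$ at some point whose corresponding $Q$ is invertible. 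The seed point $(\tilde\bfp, \bfu) = (P, 0)$ already satisfies $\Phi(P,0) = \phi_G(P^T P)$, but gives a $Q$ of rank only $n$, so the real job is to deform off of this point in the direction of nonzero $\bfu$ while staying on the level set of $\Phi$.

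Since $\bfu$ appears only quadratically in $\Phi$, the partial derivative $\partial\Phi/\partial\bfu$ vanishes at $\bfu = 0$, and hence $d\Phi_{(P,0)}$ equals the differential at $P$ of $f_n(\tilde\bfp) := (\|\tilde\bfp_i\|^2)_i \oplus (\tilde\bfp_i \cdot \tilde\bfp_j)_{ij \in E}$. Using the invertible linear change of target coordinates $\|\tilde\bfp_i - \tilde\bfp_j\|^2 = \|\tilde\bfp_i\|^2 + \|\tilde\bfp_j\|^2 - 2\tilde\bfp_i \cdot \tilde\bfp_j$, the rank of $df_n(P)$ equals the rank of the Jacobian of the squared-edge-length map of the augmented graph $G^\ast$ obtained from $G$ by adjoining a new vertex $0$ joined to every $i \in V$, evaluated at the configuration $(0, \bfp_1, \ldots, \bfp_m) \in (\rr^n)^{m+1}$ with vertex $0$ pinned at the origin. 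Pinning one vertex removes exactly the $n$-dimensional translational subspace from the kernel of the rigidity matrix, so this pinned rank equals the rank of the full $n$-dimensional rigidity matrix of $G^\ast$. Because $G^\ast$ is the cone over $G$, the coning theorem for generic rigidity matroids tells us that $E(G^\ast)$ is independent in the $n$-dimensional generic rigidity matroid if and only if $E(G)$ is independent in the $(n-1)$-dimensional generic rigidity matroid. Under the hypothesis of the theorem, therefore, $df_n(P)$, and hence $d\Phi_{(P,0)}$, is surjective for generic $P$.

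The implicit function theorem now produces a smooth function $\bfu \mapsto \tilde\bfp(\bfu)$ with $\tilde\bfp(0) = P$ and $\Phi(\tilde\bfp(\bfu), \bfu) = \phi_G(P^T P)$ for all sufficiently small $\bfu$. Writing the resulting matrix $Q$ in block form
\[
Q \;=\; \begin{pmatrix} \tilde P(\bfu) \\ U \end{pmatrix} \in \rr^{m \times m},
\]
where $\tilde P(\bfu) \in \rr^{n \times m}$ has columns $\tilde\bfp_i(\bfu)$ and $U \in \rr^{(m-n) \times m}$ has columns $\bfu_i$, we observe that for generic small $\bfu$ the matrix $U$ has rank $m-n$ and its row span meets the row span of $\tilde P(\bfu)$ (a small perturbation of the rank-$n$ row span of $P$) trivially. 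Hence $\rank(Q) = n + (m-n) = m$, so $\Sigma := Q^T Q$ is positive definite and $\phi_G(\Sigma) = \phi_G(P^T P)$, yielding $\mlt(G) \leq n$.

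The main obstacle is the rigidity-theoretic Jacobian identification, i.e.\ the step that translates independence of $E$ in the $(n-1)$-dimensional generic rigidity matroid into the surjectivity of $df_n(P)$ via coning; once this is in hand, the rest is a routine implicit function theorem argument plus a transversal dimension count ensuring that the perturbed $Q$ is invertible.
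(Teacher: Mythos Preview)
Your argument is correct and essentially self-contained, but it follows a different route from the paper's. The paper does not prove $\mlt(G)\le n$ directly: it first establishes the matroid isomorphism $S(m,n)/\diag\cong\mathcal{A}(n-1)$ by an explicit Jacobian manipulation (the ``scaling'' argument in the proof of Theorem~\ref{thm:matroidrelation}, normalizing the last coordinate of each $\bfp_i$ to $1$), deduces $\rank(G)\le n$, and then invokes Uhler's theorem $\mlt(G)\le\rank(G)$ as a black box from \cite{Uhler2012}. Your implicit-function-theorem perturbation is, in effect, a direct proof of the relevant instance of Uhler's theorem. Conversely, for the key Jacobian surjectivity you appeal to the coning theorem from rigidity theory, whereas the paper's scaling trick is itself a hands-on proof of precisely that coning statement for these matroids. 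So you have traded one external citation (Uhler) for another (coning). The paper's route has the advantage that the intermediate conclusion $\rank(G)\le n$ is reused elsewhere (notably for the score matching threshold in Section~\ref{sec:smt}); your route is more direct if one cares only about $\mlt(G)$.

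One small point worth tightening: in your final step you assert that for generic small $\bfu$ the row span of $U$ meets the row span of $\tilde P(\bfu)$ trivially, but $\tilde P(\bfu)$ itself depends on $\bfu$, so ``generic $\bfu$'' is doing double duty. A clean way to close this is to fix once and for all a matrix $U_0\in\rr^{(m-n)\times m}$ of full row rank whose row span is transverse to the row span of $P$ (a Zariski-open condition on $U_0$), set $\bfu=tU_0$, and note that as $t\to 0^+$ the row span of $\tilde P(tU_0)$ converges to that of $P$ while the row span of $tU_0$ is fixed; transversality being open, $Q(tU_0)$ has full rank for all sufficiently small $t>0$.
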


\medskip

In spite of the
seeming importance of the maximum likelihood threshold in applications where $n \ll m$,  very little
is known about the value of $\mlt(G)$ except in certain special instances.  Some of these instances are straightforward:
\begin{itemize}
\item  $\mlt(G) = 1$ if and only if $G$ has no edges,
\item  $\mlt(G) = 2$ if and only if $G$ has no cycles, and
\item  $\mlt(G) = m = \#V$ if and only if $G  = K_{m}$.
\end{itemize}
For more complicated graphs, Buhl showed in \cite{Buhl1993} that the $\mlt(G)$ is bounded in terms of the clique number $\omega(G)$ and treewidth $\tau(G)$ of the graph.  Recall that the clique number of a graph is the number of vertices
of the largest complete subgraph of $G$.  The treewidth of a graph is one 
less than the clique number of the smallest chordal cover of $G$.  

\begin{prop}\cite[Cor 3.3]{Buhl1993}\label{prop:Buhl}
Let $G$ be a graph.  Then
$$
\omega(G) \leq  \mlt(G)  \leq  \tau(G) + 1.
$$
\end{prop}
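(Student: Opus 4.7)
My plan is to establish the two inequalities separately, since they require different ideas. The lower bound $\omega(G) \leq \mlt(G)$ rests on the observation that a maximum clique of $G$ induces a principal submatrix of $\Sigma_0$ that is entirely determined by $\phi_G(\Sigma_0)$, and principal submatrices of positive definite matrices are themselves positive definite. The upper bound $\mlt(G) \leq \tau(G)+1$ I would prove by passing to a chordal cover of $G$, combined with a simple monotonicity property of $\mlt$.

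For the lower bound, set $k = \omega(G)$ and fix a clique $C \subseteq V$ of size $k$. For \emph{every} $\Sigma_0 \in Sym(m,n) \cap \mathbb{S}^m_{\ge 0}$ with $n < k$, the submatrix $(\Sigma_0)_{C \times C}$ has rank at most $n < k$ and hence is singular. Because $C$ induces a clique of $G$, the map $\phi_G$ specifies every entry of $\Sigma_0$ indexed by $C \times C$, so any $\Sigma \in \mathbb{S}^m_{>0}$ with $\phi_G(\Sigma) = \phi_G(\Sigma_0)$ must satisfy $\Sigma_{C \times C} = (\Sigma_0)_{C \times C}$. Since principal submatrices of positive definite matrices are positive definite, no such $\Sigma$ can exist. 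Thus the MLE fails to exist for \emph{all} (not merely almost all) such $\Sigma_0$, which forces $\mlt(G) \geq k = \omega(G)$.

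For the upper bound, I would first record a monotonicity lemma: if $G$ and $G'$ share the same vertex set and $E(G) \subseteq E(G')$, then $\mlt(G) \leq \mlt(G')$. The reason is that $\phi_{G'}(\Sigma_0)$ prescribes strictly more entries than $\phi_G(\Sigma_0)$ does, so any PD completion of $\phi_{G'}(\Sigma_0)$ is automatically a PD completion of $\phi_G(\Sigma_0)$. Next I would choose $G'$ to be a chordal cover of $G$ realizing the treewidth, so $\omega(G') = \tau(G)+1$. Finally I would invoke the classical structural fact that for a chordal graph $G'$, a PD completion exists if and only if $(\Sigma_0)_{C \times C}$ is positive definite for every maximal clique $C$ of $G'$. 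When $n \geq \omega(G')$, a generic factorization $\Sigma_0 = P^T P$ with $P \in \mathbb{R}^{n \times m}$ has the property that the $n \times |C|$ submatrix of columns of $P$ indexed by $C$ has full column rank, making $(\Sigma_0)_{C \times C}$ positive definite. Hence $\mlt(G') \leq \omega(G')$, and monotonicity then yields $\mlt(G) \leq \mlt(G') = \tau(G)+1$.

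The main technical obstacle is the chordal case: showing that on a chordal graph $G'$ the MLE exists exactly when each maximal clique submatrix is positive definite. The proof uses the junction tree of $G'$, ordering its maximal cliques $C_1, \ldots, C_r$ by the running intersection property and constructing a PD completion one clique at a time, attaching each $C_{i+1}$ to the previously completed block along its separator via a Schur-complement argument; the existence of each attachment reduces exactly to positive definiteness of the relevant clique's sample covariance. Once this structural result is in hand, the rest of the proposition follows from the two elementary arguments above.
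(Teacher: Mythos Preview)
The paper does not supply its own proof of this proposition; it is quoted as a known result with a citation to Buhl~\cite{Buhl1993}. Your argument is correct and follows the standard route one would expect: the lower bound via the clique principal submatrix being fully specified by $\phi_G$ and necessarily rank-deficient when $n<\omega(G)$, and the upper bound via monotonicity of $\mlt$ under edge addition combined with the classical positive-definite completion theorem for chordal patterns (so that $\mlt(G')=\omega(G')$ for any chordal cover $G'$). Indeed, the paper itself notes immediately after the proposition that it implies $\mlt(G)=\omega(G)$ for chordal $G$, which is precisely the key intermediate fact in your upper-bound argument.
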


Proposition \ref{prop:Buhl} implies that if $G$
is chordal then $\mlt(G) = \omega(G)$.  
However, in general, these bounds
are far from optimal and far from one another.
For instance, there are graphs with $\omega(G) = 2$ and arbitrarily large treewidth.  

In this paper, we develop the connection between the maximum likelihood threshold and combinatorial rigidity theory through the \emph{rank}\footnote{ A related but non-equivalent problem to Problem \ref{pr:main} was recently explored by Ben-David \cite{BenDavid2014} and asks ``For each graph $G$, what is the smallest $n$ such that \emph{for every} 
$\Sigma_{0}  \in Sym(m,n) \cap \mathbb{S}^{m}_{\ge 0}$
\emph{in general position} there exists a $\Sigma \in \mathbb{S}^{m}_{>0}$ such that
$\phi_{G}(\Sigma_{0})  =  \phi_{G}(\Sigma)$?"  Such an $n$ is an upper bound on the $\mlt(G)$ and is referred to as the \emph{Gaussian rank} of $G$ in \cite{BenDavid2014}.  The Gaussian rank of a graph is different from the \emph{rank} of a graph that we explore in this paper.} of a graph.


\begin{defn}
The \emph{rank}   of a graph $G$, denoted ${\rm rank}(G)$, is the smallest
$n$ such that $ \dim \phi_{G} (Sym(m,n))  =  \#V + \#E$.  
\end{defn}
\noindent In \cite{Uhler2012}, Uhler  showed the following bound relating the maximum likelihood threshold and
the rank of $G$.

\begin{thm}\cite[Thm 3.3]{Uhler2012}\label{thm:uhler}
Let $G$ be a graph.  Then 
$$
\mlt(G)  \leq {\rm rank}(G).
$$
\end{thm}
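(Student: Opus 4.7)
My plan is to combine a convex-geometric reformulation of MLE existence with a dimension count built on the irreducibility of the rank-bounded variety $Sym(m,n)$. Set $n=\rank(G)$ and let $C:=\phi_G(\mathbb{S}^m_{\geq 0})$ and $C^\circ:=\phi_G(\mathbb{S}^m_{>0})$. Because $\phi_G$ is a surjective real-linear coordinate projection and $\mathbb{S}^m_{>0}$ is an open convex cone with closure $\mathbb{S}^m_{\geq 0}$, $C^\circ$ is an open convex cone in $\rr^{V+E}$ whose closure is $C$; in particular $C^\circ$ is the Euclidean interior of $C$ and $C\setminus C^\circ=\partial C$. The matrix-completion reformulation of MLE existence recalled in the introduction then says that for $\Sigma_0\in\mathbb{S}^m_{\geq 0}$ the MLE exists if and only if $\phi_G(\Sigma_0)\in C^\circ$, equivalently $\phi_G(\Sigma_0)\notin\partial C$. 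So it suffices to show that the ``bad'' data set
$$
B := \bigl\{\,\Sigma_0 \in Sym(m,n)\cap\mathbb{S}^m_{\geq 0}\ :\ \phi_G(\Sigma_0)\in\partial C\,\bigr\}
$$
has Lebesgue measure zero inside $Sym(m,n)\cap\mathbb{S}^m_{\geq 0}$.

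For the dimension count I would work with $Sym(m,n)$ as an irreducible complex variety of complex dimension $\binom{n+1}{2}+n(m-n)$. The hypothesis $n=\rank(G)$ is precisely the statement that the polynomial map $\phi_G|_{Sym(m,n)}$ is dominant onto the $(\#V+\#E)$-dimensional affine space. The boundary $\partial C$ is a semialgebraic subset of $\rr^{V+E}$ of real dimension at most $\#V+\#E-1$, so its Zariski closure in $\cc^{V+E}$ is a proper closed subvariety. Pulling back by a dominant morphism from an irreducible variety, one obtains that
$$
\phi_G^{-1}\bigl(\,\overline{\partial C}^{\,\mathrm{Zar}}\bigr)\cap Sym(m,n)
$$
is a proper Zariski-closed subset of $Sym(m,n)$ and hence has strictly smaller dimension than $Sym(m,n)$ by irreducibility. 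Its real points thus form a lower-dimensional semialgebraic subset of the real locus $Sym(m,n)_\rr$, and $B$ is contained in this lower-dimensional set.

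Finally, $Sym(m,n)\cap\mathbb{S}^m_{\geq 0}$ is full-dimensional in $Sym(m,n)_\rr$: rank-exactly-$n$ PSD matrices form a Euclidean-open subset of the real smooth locus of $Sym(m,n)$, since signature $(n,0)$ is an open condition among real rank-$n$ symmetric matrices. Combining this with the previous paragraph, $B$ has positive codimension in $Sym(m,n)\cap\mathbb{S}^m_{\geq 0}$ and so has Lebesgue measure zero, giving $\mlt(G)\leq n=\rank(G)$. The main technical obstacle I anticipate is the interplay between convex analysis and algebraic geometry: $\partial C$ is semialgebraic but not Zariski closed, and $\mathbb{S}^m_{\geq 0}$ is not an algebraic variety. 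Nonetheless, the key algebraic step (a proper Zariski-closed subset pulls back to a proper Zariski-closed subset under a dominant morphism from an irreducible variety) is classical, and the remaining transitions between real semialgebraic and complex algebraic dimensions are handled by Tarski--Seidenberg together with the bound ``real dimension $\leq$ complex dimension of Zariski closure.''
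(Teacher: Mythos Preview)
The paper does not give its own proof of this statement: Theorem~\ref{thm:uhler} is quoted directly from \cite[Thm~3.3]{Uhler2012} and used as a black box throughout. So there is no ``paper's proof'' to compare against; I will simply assess your argument.

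Your proof is essentially correct and is the natural one. The key steps---(i) reformulating MLE existence as $\phi_G(\Sigma_0)\in\phi_G(\mathbb{S}^m_{>0})$, (ii) identifying $\phi_G(\mathbb{S}^m_{>0})$ with the interior of the convex set $C=\phi_G(\mathbb{S}^m_{\geq 0})$ so that the bad set lies over $\partial C$, and (iii) using dominance of $\phi_G|_{Sym(m,n)}$ to conclude that the preimage of the Zariski closure of $\partial C$ is a proper closed subvariety---are all sound. For (ii), the equality $\phi_G(\mathbb{S}^m_{>0})=\mathrm{int}(C)$ follows from the standard convex-analysis fact that a surjective linear map sends the relative interior of a convex set onto the relative interior of its image; you do not actually need $C$ to be closed, only that $C\setminus C^\circ\subseteq\partial C$, which is immediate. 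For (iii), the chain ``semialgebraic dimension $\leq$ real Zariski dimension $=$ complex dimension of complexification'' is exactly the right way to control $\overline{\partial C}^{\mathrm{Zar}}$, and your final observation that rank-exactly-$n$ positive semidefinite matrices form a Euclidean-open piece of the smooth real locus of $Sym(m,n)$ is precisely what is needed to transfer ``proper Zariski-closed in $Sym(m,n)$'' to ``Lebesgue-null in $Sym(m,n)\cap\mathbb{S}^m_{\geq 0}$.''

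This is in the same spirit as Uhler's original argument, which also hinges on the observation that the algebraic boundary of $\phi_G(\mathbb{S}^m_{\geq 0})$ pulls back to a lower-dimensional set once $\phi_G$ is dominant on $Sym(m,n)$.
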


It is still unknown whether there exists a graph such 
that $\mlt(G) < {\rm rank}(G)$, but
this might be because the maximum likelihood threshold is so poorly understood. 
The main
goal of this paper is to develop a better understanding of the
notion of the rank of a graph, so that we can develop better bounds on 
the maximum likelihood threshold.   
One always has ${\rm rank}(G) \leq \tau(G) + 1$, but usually ${\rm rank}(G)$ is
significantly smaller than $\tau(G) + 1$, which yields substantially
improved bounds.   For example, for an arbitrary $k_{1} \times k_{2}$ 
grid with $k_{1}, k_{2} \geq 2$, denoted  $Gr_{k_{1},k_{2}}$, 
$\rank(Gr_{k_{1},k_{2}}) = \mlt(Gr_{k_{1}, k_{2}}) = 3$, whereas 
$\tau(Gr_{k_{1},k_{2}})+1 = \min(k_{1},k_{2})+1$ is substantially larger 
(Corollary \ref{cor:grid}).

\medskip

While the question of whether or not there is a gap between $\mlt(G)$ and $\rank(G)$ remains open, we conclude the paper by turning our attention to another estimator, the \emph{score matching estimator} (SME). The \emph{score matching threshold}
is the smallest amount of data such that the SME exists with probability one.
Theorem \ref{thm:smt} states that the score matching threshold of a graph $G$ is equal to its rank.  
The SME was introduced in \cite{Hyvarinen2005} and furthered studied in \cite{Forbes2015}.  The score matching equations are linear, so when the SME exists, computing the estimator is efficient even for large dense graphs.  Hence, the SME has promising applications to model selection for high dimensional graphical models.

As Lauritzen and Forbes point out in \cite{Forbes2015}, a simple sufficient condition for the existence of the SME would be advantageous, since it could be used to limit model searches.  By the same reasoning, simple sufficient conditions on the existence of the MLE are desirable as well. Corollary \ref{cor:lamansthm} of Section 3 and Corollary \ref{cor:smt3} of Section 6, give such sufficient conditions for the $\mlt(G)$ and $\smt(G)$ when $n=3$; the conditions can be checked in $O(\#V(G) \cdot \#E(G))$ time \cite{Jacobs1997}.

The remainder of the paper is organized as follows.  In Section \ref{sec:crt}, we introduce algebraic matroids, in particular, the symmetric minor matroid and the combinatorial rigidity matroid.  Within this matroidal setting, we show that the ${\rm rank}(G)$ is the smallest $n$ for which the set of edges of $E(G)$ are independent in the  generic rigidity matroid $\mathcal{A}(n-1)$.    In Section \ref{sec:basic}, we provide a 
brief summary of the consequences of this connection for the maximum likelihood threshold.
In Section \ref{sec:splitting}, we provide a splitting theorem which allows for the 
computation of improved bounds on $\rank(G)$ by reducing to smaller graphs,
at the expense of calculating the \emph{birank} of bipartite graphs.
In Section \ref{sec:weak}, we introduce the notion of weak maximum likelihood threshold,
and we provide a splitting lemma and bounds for the weak maximum likelihood threshold based
on the chromatic number.  Finally, in Section \ref{sec:smt}, we show that the $\rank(G)$ is equal to $\smt(G)$ and discuss consequences.


\section{Combinatorial Rigidity Theory}\label{sec:crt}

In this section, we relate the rank of a graph to  combinatorial 
rigidity theory.  This connection is explained via certain algebraic matroids, 
which we define here.  See \cite{Oxley2011, Welsh1976} for more background 
on matroids and \cite{Graver1993, Whiteley1996} for background on rigidity theory.   Both the  rigidity matroid and symmetric minor matroids are discussed in detail in Section 3 of \cite{KRT2013} in the context of matroids with symmetries.

\begin{defn}
Let $S$ be a set and $\mathcal{I}$ a collection of subsets of $S$ satisfying:
\begin{enumerate}
\item  $\emptyset \in \cali$
\item  If $X \in \cali$ and $Y \subseteq X$ then $Y \in \cali$, and
\item  If $X, Y \in \cali$ with $|X| < |Y|$ then there is a $y \in Y$
such that $X \cup \{ y \} \in \cali$.
\end{enumerate}
The pair $(S, \cali)$ is called a \emph{matroid} and the elements
of $\cali$ are called \emph{independent sets}.

\end{defn}

The protypical example of a matroid comes from linear algebra:
if $S$ is a set of vectors and $\cali$ consists of all linearly independent
subsets then the pair $(S, \cali)$ is a matroid.  Other terminology
from matroid theory comes from linear algebra.  An independent set
of maximal size in $\cali$ is called a \emph{basis}.  A subset $X \subseteq S$
that contains a basis is said to \emph{span} the matroid.
The main type of matroid that we will need in this work
comes from algebra.

\begin{defn}
Let $\kk$ be a field, and let $S = \{\alpha_{1}, \ldots, \alpha_{d}\}$ be elements
of a field extension $\mathbb{L}/\kk$.  The \emph{algebraic matroid} on $S$
is the matroid whose independent sets are the collections of $X \subset S$ that
are algebraically independent over $\kk$.
\end{defn}

Two typical ways that algebraic matroids arise are via prime ideals and via parametrizations.
In particular, let $I \subseteq \kk[x] :=  \kk[x_{1}, \ldots, x_{n}]$ be a prime ideal,
and consider the field extension 
$K(\kk[x]/I) / \kk$
where $K(\kk[x]/I)$ denotes the field of fractions.  The natural algebraic
matroid to consider in this context is the matroid on the elements $x_{1}, \ldots, x_{n}$.

The algebraic matroid associated to a rational parametrization is described as follows.
Let $\kk(t) :=  \kk(t_{1}, \ldots, t_{e})$ be the field of fractions of $\kk[t] := 
\kk[t_{1}, \ldots, t_{e}]$.  Consider $d$ rational functions $f_{1}, \ldots, f_{d} \in \kk(t)$.
These determine an algebraic matroid in the obvious way.  This is a special case of the
prime ideal description because we can take the presentation ideal $I \subseteq \kk[x]$  of the 
$\kk$-algebra homomorphism $f: \kk[x]  \rightarrow \kk(t),  f(x_{i})  = f_{i}(t)$.
The algebraic matroid on $x_{1}, \ldots, x_{d} \in K(\kk[x]/I)$ is the
same as the algebraic matroid on $f_{1}, \ldots, f_{d}$, precisely because
$I$ is the ideal of relations among $f_{1}, \ldots, f_{d}$.

The generic rigidity matroid $\mathcal{A}(n)$ of dimension $n$ is constructed as follows.
Let $P = (p_{ij})_{i,j \in n, m}$ be an $n \times m$ matrix of algebraically independent 
indeterminates.  Let
$\bfp_{j}$ be the $j$-th column of $P$.  Consider the algebraic matroid on the 
set of ${m \choose 2}$ polynomials 
$$f_{ij}  =  \| \bfp_{i} - \bfp_{j} \|_{2}^{2}\in \rr[p].$$
One should think of this matroid as giving dependence/independence relationships
between the set of distances between $m$ generic points
in $\rr^{n}$.  A graph $G = (V, E)$ with $V=[m]:=\{1, \ldots, m\}$ is called \emph{rigid} if, for generic choices of the
points $\bfp_{1}, \ldots, \bfp_{m} \in \rr^{n}$, the set of distances $f_{ij}$ such that
$ij \in E$, determine all the other distances $f_{ij}$ with $ij  \in {[m] \choose 2}$.
In the matroidal setting that we have introduced here,
we  weaken the condition to allow only finitely many possibilities
for the other missing distances.  In the language of algebraic matroids, this means that
the set of polynomials $\{f_{ij} : ij \in E(G) \}$ is a spanning set for
the algebraic matroid $\mathcal{A}(n)$.
On the other hand, a graph $G$ is \emph{stress-free} precisely when  $\{f_{ij} : ij \in E(G) \}$
is an independent set in the algebraic matroid $\mathcal{A}(n)$. When this is the case, we will say \emph{$E(G)$ is an independent set in $\mathcal{A}(n)$}.  A graph $G$ that is simultaneously stress-free and rigid in dimension $n$,
is called \emph{isostatic}.  In the matroid language, this says that $E(G)$ is a basis
in the matroid $\mathcal{A}(n)$.

\begin{rmk}
Note that rigidity and being stress-free are properties that
hold generically.  There are situations where a graph is rigid but there exist non-generic
choices of the points $\bfp_{j}$ that make the resulting framework flexible. 
Since we are only interested in generic properties of the graph, we can ignore
such issues.
\end{rmk}

The second algebraic matroid we will study is the symmetric minor matroid $S(m,n)$.
In particular, let $ \cc[\Sigma] := \cc[\sigma_{ij}: 1 \leq i \leq j \leq m ]$
and let $I_{n+1}$ be the prime ideal of $(n+1)$-minors of the generic symmetric
matrix $\Sigma$. 
The symmetric minor matroid $S(m,n)$ is the algebraic matroid of the elements
$\sigma_{ij}: 1 \leq i \leq j \leq m$ in the extension $K( \cc[\Sigma]/I_{n+1})/\cc$.
In the language of algebraic matroids, the rank of the graph $G$ can be expressed
as follows.

\begin{prop}\label{prop:symminormatroid}
Let $G$ be a graph on vertex set $[m]$.  The rank of $G$ is the smallest $n$ such that 
$\{ \sigma_{ii}: i \in [m]\}  \cup \{ \sigma_{ij} : ij \in E(G) \}$ is
an independent set of the symmetric minor matroid $S(m,n)$.
\end{prop}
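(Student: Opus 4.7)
The plan is to observe that the stated equivalence is essentially a translation between the geometric dimension of a projection and the transcendence degree of the corresponding coordinate functions, with one classical input from commutative algebra.

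First, I would recall the classical fact that the ideal $I_{n+1}$ of $(n+1)$-minors of the generic symmetric matrix is prime, so $Sym(m,n) = V(I_{n+1})$ is an irreducible affine variety whose coordinate ring is the integral domain $\cc[\Sigma]/I_{n+1}$. This is exactly the setup under which the algebraic matroid $S(m,n)$ is defined, via the field extension $K(\cc[\Sigma]/I_{n+1})/\cc$.

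Next, I would identify $\phi_G$ (restricted to $Sym(m,n)$) as a morphism of irreducible varieties whose image is constructible. By the standard fiber-dimension argument, the dimension of the Zariski closure of $\phi_G(Sym(m,n))$ equals the transcendence degree over $\cc$ of the subfield of $K(\cc[\Sigma]/I_{n+1})$ generated by the images of the coordinate functions $\sigma_{ii}$ for $i \in [m]$ and $\sigma_{ij}$ for $ij \in E(G)$. By definition of the algebraic matroid, this transcendence degree is precisely the matroid rank of the set
$$T_G := \{\sigma_{ii} : i \in [m]\} \cup \{\sigma_{ij} : ij \in E(G)\}$$
inside $S(m,n)$.

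Finally, since $|T_G| = \#V + \#E$, the matroid rank of $T_G$ equals $\#V + \#E$ if and only if $T_G$ is an independent set of $S(m,n)$. Hence the condition $\dim \phi_G(Sym(m,n)) = \#V + \#E$ is equivalent to $T_G$ being independent in $S(m,n)$; taking the smallest such $n$ gives the proposition. The only nontrivial input is the primeness of the symmetric determinantal ideal, which is classical (e.g., Kutz); the rest is a straightforward unwinding of definitions, so I anticipate no real obstacle.
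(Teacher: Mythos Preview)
Your argument is correct. In fact, the paper does not give a separate proof of this proposition: it is stated immediately after the definition of $S(m,n)$ as a direct translation of the definition of $\rank(G)$ into matroid language, with no further justification. Your proposal supplies exactly the standard reasoning the paper leaves implicit, namely that for the irreducible variety $Sym(m,n)=V(I_{n+1})$ the dimension of the coordinate projection $\phi_G$ equals the transcendence degree of the corresponding $\sigma_{ij}$ in the function field, which is the matroid rank of $T_G$ in $S(m,n)$, and that this equals $|T_G|=\#V+\#E$ precisely when $T_G$ is independent. So your approach and the paper's (tacit) approach coincide.
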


The ideal $I_{n+1}$ is the vanishing ideal of a parametrization, a
fact that we will use in connecting the matroid $S(m,n)$ to the matroid $\mathcal{A}(n-1)$.
Indeed, every symmetric matrix of rank $\leq n$ can be realized as $P^{T}P$ for some
$n \times m$ matrix $P$ (over $\cc$).  Hence, if we let 
$g_{ij}  =  \bfp_{i} \cdot \bfp_{j}$ for $1 \leq i \leq j \leq m$
then the algebraic matroid on these elements is the same as the algebraic
matroid $S(m,n)$.

For the rank problem of interest, that is the rank problem for studying the maximum likelihood threshold, we
are always looking at sets that contain all of the diagonal elements $\sigma_{ii}$.
Hence, we can look at independent sets in the matroid contraction by that collection
of elements.  From the standpoint of algebraic matroids, that amounts to studying the
algebraic matroid
$S(m,n)/{\rm diag}$, of the field extension $K( \cc[\Sigma]/I_{n+1})/\cc(\sigma_{ii}: i \in [m])$
with ground set consisting of the elements $\sigma_{ij}:  i < j$.

\begin{thm}\label{thm:matroidrelation}
The algebraic matroids $S(m,n)/ {\rm diag}$ and $\mathcal{A}(n-1)$ are isomorphic.
\end{thm}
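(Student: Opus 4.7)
The plan is to realize $S(m,n)/\diag$ as a contraction of the generic rigidity matroid in one higher dimension, and then invoke the classical cone construction of combinatorial rigidity to identify the result with $\ca(n-1)$. I first unravel $S(m,n)/\diag$ using the Gram parametrization of symmetric matrices of rank $\leq n$: writing $\Sigma = P^{T}P$ with columns $\bfp_{1}, \ldots, \bfp_{m} \in \rr^{n}$, the element $\sigma_{ij}$ is identified with $g_{ij} := \bfp_{i} \cdot \bfp_{j}$, so $S(m,n)/\diag$ is the algebraic matroid on $\{g_{ij} : i < j\}$ over the base field $\kappa := \cc(g_{ii} : i \in [m])$.

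The key observation is that the squared distance $f_{ij} := \|\bfp_{i} - \bfp_{j}\|_{2}^{2} = g_{ii} + g_{jj} - 2g_{ij}$ is an affine function of $g_{ij}$ with coefficients in $\kappa$, so $\kappa(g_{ij} : ij \in E) = \kappa(f_{ij} : ij \in E)$ for every edge set $E$. The bijection $g_{ij} \leftrightarrow f_{ij}$ therefore gives an isomorphism of $S(m,n)/\diag$ with the algebraic matroid on the Euclidean squared distances $\{f_{ij} : i < j\}$ over $\kappa$. Introducing an auxiliary vertex $0$ and setting $\bfp_{0} := \mathbf{0}$, each diagonal generator becomes $g_{ii} = \|\bfp_{i} - \bfp_{0}\|_{2}^{2}$, a squared distance from the ``cone vertex'' $0$ to $\bfp_{i}$. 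Because distances are translation invariant, this specialization does not alter the algebraic matroid of distances, and we conclude that $S(m,n)/\diag$ is precisely the contraction of $\ca(n)$ on the complete graph $K_{m+1}$ (with vertex set $\{0,1,\ldots,m\}$) by the $m$ cone edges $\{0i : i \in [m]\}$, restricted to the inner edges $\binom{[m]}{2}$.

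The final step is Whiteley's coning theorem from combinatorial rigidity: a graph $G$ on $[m]$ is independent in $\ca(n-1)$ if and only if its cone $G * 0$ is independent in $\ca(n)$. Equivalently, contraction by the cone edges turns $\ca(n)$ on the cone graph into $\ca(n-1)$ on the inner graph, which together with the reduction above yields $S(m,n)/\diag \cong \ca(n-1)$. The main obstacle is this appeal to the coning theorem, a nontrivial classical result. A self-contained Jacobian alternative proceeds as follows: at a generic $P$, the Jacobian of $(g_{ii}, g_{ij})_{ij \in E}$ quotiented by the rows corresponding to $\{g_{ii}\}$ amounts to projecting each column block $i$ onto the $(n-1)$-dimensional orthogonal complement of $\bfp_{i}$, and the resulting block matrix (whose edge $ij$ row carries the projection of $\bfp_{j}$ onto $\bfp_{i}^{\perp}$ in block $i$ and the symmetric entry in block $j$) can, after a suitable choice of bases for the orthogonal complements, be placed in correspondence with the rigidity matrix of $m$ generic points in $\rr^{n-1}$, yielding the same matroid.
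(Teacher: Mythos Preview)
Your argument is correct and takes a genuinely different route from the paper. The paper works entirely at the level of Jacobians: it represents $S(m,n)$ by the Jacobian $J(g,P)$ of the Gram map $P \mapsto (\bfp_i\cdot\bfp_j)$, rescales each $\bfp_i$ so that its last coordinate equals $1$ (which is legitimate since scaling corresponds to row/column operations on $J(g,P)$), and then performs the column operations $[ij] \leftarrow [ij] - \tfrac{1}{2}[ii] - \tfrac{1}{2}[jj]$. After these operations the $ij$-column has $\bfp_j-\bfp_i$ in block $i$ and $\bfp_i-\bfp_j$ in block $j$, with zero in the last coordinate of each block; contracting by the diagonal columns simply deletes that last coordinate, leaving precisely the rigidity matrix $J(f,P')$ for $P'\in\rr^{(n-1)\times m}$.

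Your approach instead recognizes $g_{ii}$ as the squared distance to an auxiliary cone vertex $\bfp_0=\mathbf{0}$, so that $S(m,n)/\diag$ becomes the contraction of $\ca(n)$ on $K_{m+1}$ by the $m$ cone edges, and then invokes Whiteley's coning theorem. This is conceptually illuminating---it explains \emph{why} the dimension drops by exactly one and ties the result to a standard construction in rigidity theory---but it trades self-containment for dependence on a nontrivial external result. The paper's direct manipulation is in effect a streamlined proof of (this instance of) the coning theorem via the ``affine patch'' scaling, and has the advantage that the Jacobians $J(f,P)$ and $J(g,P)$ are reused elsewhere (e.g.\ in Section~\ref{sec:smt}). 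Your closing Jacobian sketch, projecting each block onto $\bfp_i^{\perp}$, is the right idea but stops short: the phrase ``after a suitable choice of bases'' hides exactly the work the paper does explicitly, and without the scaling normalization it is not immediate that the projected matrix coincides with a standard rigidity matrix for \emph{generic} points in $\rr^{n-1}$.
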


To prove Theorem \ref{thm:matroidrelation} we use the fact that
the algebraic matroid over a field of characteristic zero is isomorphic to the representable
matroid obtained from evaluating the Jacobian at a generic point of the parameter
space.  We will make these evaluations for both of the matroids
$S(m,n)$ and $\mathcal{A}(n-1)$ and compare the results.  These particular
Jacobian matrices will appear at other points in the paper  so
we introduce them outside of the proof.

First, consider the map $f:  \rr^{n \times m}  \rightarrow \rr^{m(m-1)/2}$
with 
$$
f(P)  =   ( \| \bfp_{i} - \bfp_{j} \|_{2}^{2})_{1 \leq i < j \leq m},
$$
where we let $P = \begin{pmatrix} \bfp_{1} &  \cdots & \bfp_{m} \end{pmatrix}$.

The Jacobian $J(f, P)$ of this map is an $mn \times {m \choose 2}$ matrix.
The rows of $J(f, P)$ should be grouped into $m$ blocks of size $n$ corresponding to the
$m$ points $\bfp_{1}, \ldots, \bfp_{m}$. The $ij$ column of the Jacobian matrix is the
vector with zeros in all blocks except the $i$th and $j$th blocks which have
$\bfp_i- \bfp_j$ and $\bfp_j - \bfp_i$ respectively (we have ignored the extra factor
of $2$ that appears in all entries).  For example, for 
$m = 4$ the matrix $J(f, P)$ is
$$
\begin{pmatrix}
\bfp_1- \bfp_2 &   \bfp_1- \bfp_3 & \bfp_1- \bfp_4 & 0 & 0 & 0  \\
\bfp_2- \bfp_1 &  0 & 0 & \bfp_2- \bfp_3 & \bfp_2- \bfp_4 & 0 \\
0 & \bfp_3- \bfp_1 & 0  & \bfp_3- \bfp_2 & 0 & \bfp_3- \bfp_4  \\
0 & 0 & \bfp_4- \bfp_1 & 0 & \bfp_4- \bfp_2 & \bfp_4- \bfp_3
\end{pmatrix}.
$$

On the other hand, consider the parameterization map $g:   \rr^{n \times m}  \rightarrow \rr^{m(m+1)/2}$ 
$$
g(P)  =  (  \bfp_i \cdot \bfp_j)_{1 \leq i \leq j \leq m}.
$$
The Jacobian $J(g, P)$ of this maps is an $mn \times {m +1 \choose 2}$ matrix. 
The rows of $J(g,P)$ should be grouped into $m$ blocks of size $n$ corresponding to the
$m$ points $\bfp_{1}, \ldots, \bfp_{m}$.  When $i \neq j$, the $ij$
column in $J(g, P)$ has zero vectors in all blocks except for the $i$th and $j$th blocks, which
have $\bfp_j$ and $\bfp_i$, respectively.  When $i = j$, there is only one nonzero block,
which is $2 \bfp_i$.  For example, for 
$m = 4$ the matrix $J(g, P)$ is
$$
\begin{pmatrix}
2 \bfp_1 & \bfp_2 & \bfp_3 & \bfp_4  & 0 & 0 & 0 & 0 & 0 & 0  \\
0  & \bfp_1 & 0 &  0  & 2 \bfp_2 & \bfp_3 & \bfp_4 & 0 & 0 & 0  \\
0 &  0 & \bfp_1 & 0 & 0 & \bfp_2 & 0 & 2 \bfp_3 & \bfp_4 & 0 \\ 
0 &  0 & 0  & \bfp_1 & 0 & 0 &\bfp_2 & 0 & \bfp_3 & \bfp_4 
\end{pmatrix}.
$$

\begin{proof}[Proof of Theorem \ref{thm:matroidrelation}]
Note that since all polynomials involved are defined over the integers,
the underlying ground field can be changed to be $\cc, \rr,$ or $\qq$ without
changing the matroid in any of the matroids in question.

We also note that the rank of the Jacobian $J(g,P)$ does not change if 
we scale each point $\bfp_{i}$ by a nonzero constant $\lambda_{i}$. 
Indeed, performing such a scaling is equivalent to multiplying the 
rows corresponding the row block indexed by $i$ by $\lambda_{i}^{-1}$ and 
multiplying the column indexed by $ij$ by $\lambda_{i}\lambda_{j}$,
and row and column operations do not change the rank of a matrix.

Since the matrix $P$  is generic, we can assume that all of the 
coordinates
of each $\bfp_{i}$ are nonzero.
By choosing an appropriate scaling, we may assume that the $n$-th coordinate of each $\bfp_{i}$
 is equal to $1$. 
 We write this formally as $\bfp_{i} = {\bfp_{i}' \choose 1 }$, with $\bfp_{i}' \in \rr^{n-1}$ 
 which we can assume to be generic.  Let $P' = \begin{pmatrix} \bfp'_{1} &  \cdots & \bfp'_{m} \end{pmatrix} $.
 
 Divide the columns corresponding to pairs $ii$ by $2$. Then subtract the column corresponding 
 to pair $ii$ from each column corresponding to $ij$. Let $M$ be the resulting matrix.
The columns of $M$ corresponding to the tuples $ii$ are clearly linearly independent 
 of all other columns, because they are the only columns that contain nonzero 
 entries in the last position in each block.  Hence, when we contract by these 
 diagonal elements we can delete the last row from each block (since we get all zeros). 
 The resulting matrix, the
matrix that represents the matroid $S(m,n)/\diag$, is $J(f,P')$. 
Hence $S(m,n)/\diag$ is isomorphic to $\ca(n-1)$ as claimed.
\end{proof}

Theorem \ref{thm:matroidrelation} means that the rank of a graph can be precisely characterized in terms of the
independence condition in the matroid $\ca(n-1)$.

\begin{thm}\label{thm:equiv} Let $G=(V,E)$ be a graph.  Then $\rank(G) = n$ 
if and only if $E$ is an independent set in $\mathcal A(n-1)$ and is not an independent set in $\ca(n-2)$.
\end{thm}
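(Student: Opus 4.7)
The plan is to deduce Theorem \ref{thm:equiv} by chaining together Proposition \ref{prop:symminormatroid} and Theorem \ref{thm:matroidrelation} via the matroid contraction operation.

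First I would record the simple observation that $\{\sigma_{ii} : i \in [m]\}$ is an independent set of $S(m,n)$ for every $n \geq 1$. This is immediate from the parametrization $\Sigma = P^{T}P$, under which $\sigma_{ii} = \|\bfp_{i}\|_{2}^{2}$: these $m$ polynomials in the coordinates of a generic $P$ are algebraically independent even when $n=1$. By the definition of matroid contraction, it then follows that a set of edge labels $E$ is independent in $S(m,n)/\diag$ if and only if $\{\sigma_{ii} : i \in [m]\} \cup \{\sigma_{ij} : ij \in E\}$ is independent in $S(m,n)$.

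Next, I would combine the isomorphism $S(m,n)/\diag \cong \ca(n-1)$ supplied by Theorem \ref{thm:matroidrelation} with the previous observation to obtain the following equivalence, valid for every $n \geq 1$:
\begin{equation*}
E \text{ is independent in } \ca(n-1) \iff \{\sigma_{ii}\}_{i\in[m]} \cup \{\sigma_{ij}\}_{ij\in E} \text{ is independent in } S(m,n).
\end{equation*}
Since $\rank(G)$ is by Proposition \ref{prop:symminormatroid} the smallest $n$ for which the right-hand condition holds, the equivalence identifies $\rank(G)$ as the smallest $n$ for which $E$ is independent in $\ca(n-1)$. Both directions of the theorem now follow: if $\rank(G)=n$ then $E$ is independent in $\ca(n-1)$ (the equivalence applied at $n$) and $E$ is not independent in $\ca(n-2)$ (the equivalence applied at $n-1$, which fails by minimality of $\rank(G)$); conversely, these two conditions together yield $\rank(G) \leq n$ and $\rank(G) > n-1$, hence $\rank(G) = n$.

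I expect no substantive obstacle in executing this plan, since every ingredient has already been assembled earlier in the paper; the argument is essentially an exercise in unwinding definitions. The only point that requires a moment of care is the application of matroid contraction: one needs to know that $\diag$ is itself independent in $S(m,n)$ so that it serves as its own basis, making the characterization ``$E$ is independent in $S(m,n)/\diag$ iff $\diag \cup E$ is independent in $S(m,n)$'' directly available, and this is exactly what the first step establishes.
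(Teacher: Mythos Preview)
Your proposal is correct and follows the same route as the paper, which presents Theorem \ref{thm:equiv} as an immediate consequence of Proposition \ref{prop:symminormatroid} and Theorem \ref{thm:matroidrelation} without writing out a separate proof. You have simply made explicit the contraction step (that $\diag$ is independent in $S(m,n)$, so independence of $E$ in $S(m,n)/\diag$ is equivalent to independence of $\diag \cup E$ in $S(m,n)$) that the paper leaves implicit.
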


Combined with Theorem \ref{thm:uhler}, which bounds the $\mlt(G)$ by $\rank(G)$, Theorem \ref{thm:equiv} implies Theorem \ref{thm:mainintro}, the main theorem stated in the Introduction.


\section{Basic Results on $\rank(G)$} \label{sec:basic}

In this section we catalogue some basic results about the
rank of a graph $G$ that follow immediately from the connection to combinatorial rigidity, 
including bounds on the number of edges that can be involved and graph constructions that preserve rank.

\begin{prop}\cite[Lemma 2.5.5]{Graver1993}  \label{prop:subgraph}
Let $G'=(V', E')$ and $G=(V,E)$ such that $G'$ is a subgraph of $G$.  Then if $E$ is independent in $\mathcal A(n-1)$, the set $E'$ is independent in $\mathcal A(n-1)$.  Consequently, $\rank(G') \leq \rank (G)$.
\end{prop}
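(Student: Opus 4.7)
The plan is to leverage the Jacobian representation of $\ca(n-1)$ developed in the proof of Theorem~\ref{thm:matroidrelation} together with the second matroid axiom (every subset of an independent set is independent). There is a small subtlety here, because strictly speaking the matroid $\ca(n-1)$ attached to $G'$ has ground set $\binom{V'}{2}$ while the one attached to $G$ has ground set $\binom{V}{2}$, so ``$E'$ is independent'' must be interpreted in the correct copy of $\ca(n-1)$ before invoking the axiom.

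First I would fix $m = \#V$ and $m' = \#V'$, choose a generic matrix $P \in \rr^{(n-1)\times m}$ with columns $\bfp_1,\ldots,\bfp_m$, and let $P'$ denote the submatrix formed by the columns indexed by $V'$. Since $P$ is generic, $P'$ is generic as well, so by Theorem~\ref{thm:matroidrelation} the independence of $E'$ in the matroid $\ca(n-1)$ on $V'$ is equivalent to linear independence of the columns of $J(f,P')$ indexed by $E'$.

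The key step is the comparison of Jacobians. The column of $J(f,P)$ indexed by a pair $ij$ is supported only in the $i$-th and $j$-th row-blocks. Hence the columns of $J(f,P)$ indexed by edges $ij \in E' \subseteq \binom{V'}{2}$ are zero outside the row-blocks labelled by $V'$, and after deleting those zero rows the resulting submatrix is precisely $J(f,P')$. By hypothesis the columns of $J(f,P)$ indexed by $E$ are linearly independent; hence so are those indexed by the subset $E'$, and therefore so are the corresponding columns of $J(f,P')$. This gives independence of $E'$ in $\ca(n-1)$ on $V'$, proving the first assertion.

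The rank inequality is then immediate: setting $n = \rank(G)$, the set $E$ is independent in $\ca(n-1)$ by Theorem~\ref{thm:equiv}, so by the first part $E'$ is also independent in $\ca(n-1)$, and thus $\rank(G') \leq n = \rank(G)$. I do not anticipate any serious obstacle; the only real care is the bookkeeping between the two versions of $\ca(n-1)$ living on different ground sets, which the Jacobian picture handles cleanly because the column supports for edges in $E'$ are automatically confined to the block rows coming from $V'$.
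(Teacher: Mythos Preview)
Your argument is correct. Note, however, that the paper does not supply its own proof of this proposition: the first assertion is simply quoted from \cite[Lemma~2.5.5]{Graver1993}, and the rank inequality is left as an immediate consequence of Theorem~\ref{thm:equiv}. So there is no ``paper's proof'' to compare against beyond the bare citation.

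That said, your write-up does more than is strictly necessary. Once one fixes a single ambient vertex set $[m]$ (which one may do by adjoining isolated vertices to $G'$, an operation that does not change $\rank(G')$ since the extra diagonal entries $\sigma_{ii}$ are always algebraically independent of the rest), both $E$ and $E'$ live in the \emph{same} matroid $\ca(n-1)$, and the first assertion is literally axiom~(2) for matroids. Your Jacobian comparison between $J(f,P)$ and $J(f,P')$ is a correct and clean way to justify that passing between the two ground sets is harmless, but it is really proving the (easy) fact that isolated vertices do not affect independence in the rigidity matroid. Either route is fine; yours is just slightly more explicit about the bookkeeping you flagged.
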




The condition in the next theorem is called \emph{Laman's condition} 
in the combinatorial rigidity literature and is a necessary condition 
for a set to be independent in the rigidity matroid $\mathcal {A}(n-1)$.  
We state the theorem in terms of the rank of $G$, 
using the equivalence established in Theorem \ref{thm:equiv}.

\begin{thm}\label{thm:dimbound}\cite[Theorem 2.5.4]{Graver1993}
Let $G = (V,E)$ be a graph, and suppose that ${\rm rank}(G) \leq n$.  Then, for all
subgraphs $G' = (V', E')$ of $G$ such that $\#V' \geq n-1$ we must have
\begin{equation}  \label{eq:dimbound}
 \#E'  \leq  \#V'(n-1)  -  {n  \choose 2}.
\end{equation}
\end{thm}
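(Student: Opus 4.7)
The plan is to use the characterization from Theorem \ref{thm:equiv}: $\rank(G) \leq n$ is equivalent to $E$ being independent in $\mathcal{A}(n-1)$. By Proposition \ref{prop:subgraph}, independence in $\mathcal{A}(n-1)$ is hereditary for subgraphs, so $E'$ is also independent in $\mathcal{A}(n-1)$ for any subgraph $G' = (V', E')$. Using the Jacobian presentation of $\mathcal{A}(n-1)$ from the proof of Theorem \ref{thm:matroidrelation}, this independence means that at a generic point configuration $P' = (\bfp'_1, \ldots, \bfp'_m) \in \rr^{(n-1) \times m}$, the $\#E'$ columns of $J(f, P')$ indexed by $E'$ are linearly independent.

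The first key step is to restrict to the vertices of $V'$. Since the columns indexed by $E'$ have nonzero entries only in the block-rows indexed by $V'$, we may pass to a submatrix $J'$ of size $(n-1)\#V' \times \#E'$ whose columns remain linearly independent. Inequality \eqref{eq:dimbound} then reduces to showing that $\rank(J') \leq (n-1)\#V' - \binom{n}{2}$, since linear independence of the $\#E'$ columns forces $\#E' \leq \rank(J')$.

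To produce that rank bound, I would exhibit $\binom{n}{2}$ explicit vectors in the left kernel of $J'$, corresponding to the infinitesimal Euclidean motions of $\rr^{n-1}$: the $n-1$ constant translations (assigning a fixed $v \in \rr^{n-1}$ to every block) and the $\binom{n-1}{2}$ infinitesimal rotations (assigning $A\bfp'_i$ to block $i$, for $A$ skew-symmetric). Both families annihilate every column $ij$ of $J'$ because the required pairing reduces to $(w_i - w_j) \cdot (\bfp'_i - \bfp'_j)$, which vanishes for translations trivially and for rotations by skew-symmetry.

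The main obstacle is showing that these $\binom{n}{2}$ motions are themselves linearly independent in the left kernel, which amounts to requiring the generic points $\{\bfp'_i : i \in V'\}$ to affinely span $\rr^{n-1}$. By genericity this holds as soon as $\#V' \geq n$, giving the desired bound in that range. The remaining boundary case $\#V' = n-1$ must be handled separately, but it is trivial: the bound to verify becomes $\#E' \leq (n-1)^2 - \binom{n}{2} = \binom{n-1}{2}$, which is automatic because any graph on $n-1$ vertices has at most $\binom{n-1}{2}$ edges.
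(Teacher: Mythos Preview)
Your argument is correct and is precisely the standard rigidity-theoretic proof of Laman's necessary condition. Note, however, that the paper does not give its own proof of this theorem: it simply cites \cite[Theorem 2.5.4]{Graver1993} and restates the result in terms of $\rank(G)$ via Theorem~\ref{thm:equiv}. Your proposal supplies exactly the argument one finds in that reference---pass to the subgraph, use the Jacobian description of $\mathcal{A}(n-1)$, and bound the column rank by exhibiting the $\binom{n}{2}$ infinitesimal Euclidean motions (translations plus skew-symmetric rotations) in the left kernel, with the affine-span condition on the generic points $\{\bfp'_i : i \in V'\}$ guaranteeing their linear independence once $\#V' \geq n$. Your separate treatment of the boundary case $\#V' = n-1$ via the trivial edge count is also the standard way to close the argument.
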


 Laman's Theorem \cite{Laman1970} states that the condition 
 of Theorem \ref{thm:dimbound} is both necessary and sufficient 
 for a set to be independent in $\mathcal {A}(2)$, which combined
  with Theorem \ref{thm:matroidrelation} and Theorem \ref{thm:uhler} 
  gives us the following corollary in regards to the maximum likelihood threshold.

\begin{cor}\label{cor:lamansthm} Let $G = (V,E)$ be a graph, if for all subgraphs $G' = (V', E')$ of $G$
\begin{equation*}
 \#E'  \leq  2  (\#V')  -  3,
\end{equation*} 
then $\mlt(G) \leq 3$.
\end{cor}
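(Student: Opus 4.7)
The plan is to chain together Laman's Theorem with the main results of Section 2, reading the corollary as a purely formal consequence. First I would invoke Laman's Theorem, which asserts that the inequality $\#E' \leq 2\#V' - 3$ for every subgraph $G' = (V', E')$ of $G$ is not merely necessary (as in Theorem \ref{thm:dimbound}) but also sufficient for $E$ to be an independent set in the two-dimensional generic rigidity matroid $\mathcal{A}(2)$. This is the classical statement from \cite{Laman1970}, cited just above the corollary, and is the only external ingredient needed.

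Next I would feed this independence conclusion into the matroid dictionary developed in Section 2. By Theorem \ref{thm:equiv}, the independence of $E$ in $\mathcal{A}(2) = \mathcal{A}(n-1)$ with $n = 3$ is equivalent to $\rank(G) \leq 3$. Finally, applying Uhler's bound from Theorem \ref{thm:uhler}, namely $\mlt(G) \leq \rank(G)$, yields $\mlt(G) \leq 3$, which is the desired inequality.

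There is essentially no obstacle: the corollary is stated precisely so that it packages Laman's Theorem, Theorem \ref{thm:matroidrelation}, and Theorem \ref{thm:uhler} into a single combinatorial criterion. The only subtlety worth flagging is that Theorem \ref{thm:dimbound} gives only one direction of Laman's equivalence in general dimension, so one should be careful to note that the converse direction (sufficiency of the count) is special to dimension $2$ and relies on Laman's original theorem rather than on anything proved in the excerpt. Once this is stated, the proof is a two-line citation chain: Laman gives independence in $\mathcal{A}(2)$, Theorem \ref{thm:equiv} translates this to $\rank(G) \leq 3$, and Theorem \ref{thm:uhler} delivers $\mlt(G) \leq 3$.
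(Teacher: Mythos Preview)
Your proposal is correct and matches the paper's approach: the paper states the corollary as the immediate combination of Laman's Theorem with Theorem~\ref{thm:matroidrelation} and Theorem~\ref{thm:uhler}, which is exactly your chain (you invoke Theorem~\ref{thm:equiv} in place of Theorem~\ref{thm:matroidrelation}, but the former is just the repackaging of the latter). Your observation that sufficiency of the Laman count is special to dimension~$2$ is also precisely the point the paper is making in introducing this corollary.
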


Naively checking the conditions of Corollary \ref{cor:lamansthm} is ineffcient.
Howeevr, there is a polynomial times algorithm that can check if an edge set is independent in $\mathcal{A}(2)$ \cite{Jacobs1997}.

\begin{ex} Let $G$ be the complete bipartite graph $K_{3,3}$.  
The treewidth of $G$ is $3$, thus, by Buhl's bound in Proposition \ref{prop:Buhl}, 
we have $\mlt(G) \leq 4$.  Using Corollary \ref{cor:lamansthm}, we can obtain the improved bound
$\mlt(K_{3,3}) \leq 3$.  Since $K_{3,3}$ is not a forest, we deduce that $\mlt(K_{3,3}) = 3$. 
\end{ex}

In rigidity theory, there are many operations that take
an independent set and produce a new independent set on a larger number of vertices.
We review two of these here, vertex addition and edge splitting.  
We begin with \emph{vertex addition}, also called \emph{0-extensions}, 
and elaborate on some of the implications with respect to the maximum likelihood threshold.  
Again, we state the theorem in terms of the rank of $G$, using the equivalence established 
in Theorem \ref{thm:equiv}.  A variation of Proposition \ref{prop:vertexadd} with rank   replaced by Ben-David's Gaussian rank is proved independently in \cite{BenDavid2014}.

\begin{prop}[Vertex Addition]\label{prop:vertexadd}\cite[Lemma 11.1.1]{Whiteley1996}
Let $G=(V,E)$ be a graph such that ${\rm rank}(G)  \leq n$ and $\#V=m$.  Let $G'$
be a new graph obtained from $G$ by adding the vertex $v'$ and at most
$n-1$ edges connecting $v'$ to other vertices in $G$.  Then
${\rm rank}(G')  \leq n$, and, in particular, $\mlt(G') \leq n$.
\end{prop}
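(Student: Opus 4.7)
The plan is to prove $\rank(G') \leq n$ directly, using the equivalence from Theorem \ref{thm:equiv} that $\rank(G') \leq n$ iff $E(G')$ is independent in $\mathcal{A}(n-1)$; the bound $\mlt(G') \leq n$ then follows from Theorem \ref{thm:uhler}. I would work in the Jacobian representation of $\mathcal{A}(n-1)$ introduced before Theorem \ref{thm:matroidrelation}: pick a generic matrix $P' = (\bfp_1, \ldots, \bfp_m, \bfp_{v'}) \in \rr^{(n-1)\times(m+1)}$, and associate to each pair $ij$ the column of $J(f, P')$ which has $\bfp_i - \bfp_j$ in row block $i$, $\bfp_j - \bfp_i$ in row block $j$, and zeros elsewhere. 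Independence of $E(G')$ in $\mathcal{A}(n-1)$ is then equivalent to linear independence of the corresponding columns over a generic $P'$.

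Next, I would partition the columns indexed by $E(G')$ into the ``old'' columns (edges of $E(G)$, which do not touch $v'$) and the ``new'' columns (at most $n-1$ edges $v'w_1, \ldots, v'w_k$ with $k \leq n-1$). The key structural observation is that every old column is zero in the row block corresponding to $v'$, whereas each new column has $\bfp_{v'} - \bfp_{w_i}$ in that block. So, given any linear dependence
\[
\sum_{ij \in E(G)} a_{ij}\, \bfv_{ij} \;+\; \sum_{i=1}^{k} b_i\, \bfv_{v'w_i} \;=\; 0,
\]
restricting to the $v'$ row block yields $\sum_{i=1}^{k} b_i(\bfp_{v'} - \bfp_{w_i}) = 0$. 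Because $\bfp_{v'}$ is generic and there are at most $n-1$ such vectors living in $\rr^{n-1}$, they are linearly independent, forcing all $b_i = 0$. What remains is a dependence among the old columns, which by hypothesis (combined with Theorem \ref{thm:equiv} applied to $G$) must be trivial, so all $a_{ij} = 0$ as well.

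The main technical subtlety is handling genericity cleanly: we need $\bfp_1, \ldots, \bfp_m$ generic enough that the old columns represent the matroid $\mathcal{A}(n-1)$ restricted to $E(G)$ faithfully (so that independence of $E(G)$ in $\mathcal{A}(n-1)$ translates to linear independence of the old columns), and simultaneously need $\bfp_{v'}$ in general position relative to $\bfp_{w_1}, \ldots, \bfp_{w_k}$. Since the old columns do not involve $\bfp_{v'}$, we can first fix the $\bfp_j$'s on a Zariski-open set guaranteeing independence of the old columns, then choose $\bfp_{v'}$ on a further Zariski-open set ensuring independence of $\{\bfp_{v'}-\bfp_{w_i}\}_{i=1}^{k}$; the intersection is nonempty since both conditions are open and nonempty. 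With this setup, the argument above is unobstructed, and I expect no other difficulty beyond bookkeeping the two layers of genericity.
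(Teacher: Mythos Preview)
Your argument is correct and is essentially the standard $0$-extension argument from rigidity theory: restrict a putative linear dependence among the columns of $J(f,P')$ to the row block of $v'$, use that at most $n-1$ vectors $\bfp_{v'}-\bfp_{w_i}$ in $\rr^{n-1}$ are generically linearly independent to kill the coefficients on the new edges, and then invoke the hypothesis on $G$ for the old edges. The paper does not supply its own proof of this proposition; it simply cites \cite[Lemma~11.1.1]{Whiteley1996}, and your write-up is precisely the argument one finds there (phrased in the Jacobian language the paper has already set up).

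One small comment on presentation: the sentence ``Because $\bfp_{v'}$ is generic and there are at most $n-1$ such vectors living in $\rr^{n-1}$, they are linearly independent'' tacitly uses that the $\bfp_{w_i}$ are also in general position (to rule out the case $\sum b_i=0$, which forces an affine dependence among the $\bfp_{w_i}$ alone). You address this in your closing paragraph on genericity, but it would be cleaner to fold that observation into the main argument: either note that generic $P'$ makes $\bfp_{v'},\bfp_{w_1},\ldots,\bfp_{w_k}$ affinely independent (since $k+1\le n$ points in $\rr^{n-1}$), or simply exhibit a single $P'$ for which all $\#E(G')$ columns are independent, which already suffices to show independence in $\mathcal{A}(n-1)$.
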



\begin{defn}
Let $G$ be a graph and fix an integer $r$.  The $r$-\emph{core} of $G$ denoted
 $r\textrm{--}{\rm core}(G)$ is the graph obtained from $G$ by successively deleting vertices 
 of degree $< r$.  A graph is said to have \emph{empty $r$-core}, if  $r\textrm{--}{\rm core}(G)$
 has no vertices.
\end{defn}

Using  Proposition \ref{prop:vertexadd} inductively, we immediately deduce:

\begin{thm}\label{thm:core}  
Let $G$ be a graph with empty $n$-core.  Then ${\rm mlt}(G) \leq n$.
\end{thm}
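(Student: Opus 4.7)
The plan is to exhibit $G$ as the endpoint of a sequence of vertex additions starting from a single vertex, in such a way that each addition falls under the hypothesis of Proposition \ref{prop:vertexadd}, and then to iterate that proposition. Since Theorem \ref{thm:uhler} gives $\mlt(G) \leq \rank(G)$, it suffices to control $\rank(G)$.

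First I would unpack the definition of empty $n$-core. Because successively deleting vertices of degree less than $n$ eliminates every vertex of $G$, I obtain an ordering $v_1, v_2, \ldots, v_m$ of $V(G)$ with the following property: letting $G_i$ denote the subgraph of $G$ induced on $\{v_i, v_{i+1}, \ldots, v_m\}$, the vertex $v_i$ has degree strictly less than $n$ in $G_i$. Read in reverse, this says that for each $i$, the vertex $v_i$ is adjacent in $G$ to at most $n-1$ vertices of the already-constructed graph $G_{i+1}$.

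Next I would perform the induction from $i = m$ down to $i = 1$. The base case $G_m$ is a single isolated vertex, so $\rank(G_m) \leq n$ is immediate (assuming $n \geq 1$; the case $n=0$ forces $V(G) = \emptyset$ and is vacuous). For the inductive step, the graph $G_i$ is obtained from $G_{i+1}$ by adjoining the vertex $v_i$ together with at most $n-1$ edges to vertices of $G_{i+1}$, which is exactly the situation of Proposition \ref{prop:vertexadd}. That proposition then upgrades $\rank(G_{i+1}) \leq n$ to $\rank(G_i) \leq n$. At $i = 1$, we reach $G_1 = G$, giving $\rank(G) \leq n$, and hence $\mlt(G) \leq n$ by Theorem \ref{thm:uhler}.

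There is no substantive obstacle here: the only piece of bookkeeping is verifying that reversing the core-peeling sequence produces precisely the input needed by Proposition \ref{prop:vertexadd}, i.e.\ that the number of edges attached to $v_i$ at the moment of its addition equals its degree in $G_i$ and so is bounded by $n-1$ by construction. Once this identification is in place, the argument is a clean induction on the number of vertices.
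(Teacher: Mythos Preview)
Your proposal is correct and is exactly the argument the paper has in mind: the paper's entire proof is the single line ``Using Proposition \ref{prop:vertexadd} inductively, we immediately deduce,'' and you have spelled out that induction carefully. Your explicit reversal of the peeling order and your handling of the base case are the natural way to make the paper's one-line claim precise.
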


While not as powerful as the splitting result from the next section, 
Theorem \ref{thm:core} already implies a number of nice consequences in
some simple cases.

\begin{cor}\label{cor:grid}
Let $Gr_{k_{1},k_{2}}$ denote the $k_{1} \times k_{2}$ grid graph with $k_{1}, k_{2} \geq 2$.  Then
$\mlt(Gr_{k_{1},k_{2}})  = 3$.
\end{cor}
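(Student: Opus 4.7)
The plan is to prove $\mlt(Gr_{k_1,k_2})=3$ by establishing matching upper and lower bounds.

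For the lower bound, since $k_1, k_2 \geq 2$ the grid $Gr_{k_1,k_2}$ contains the $4$-cycle on $(1,1),(1,2),(2,2),(2,1)$, so $Gr_{k_1,k_2}$ is not a forest. By the elementary fact recorded in the introduction that $\mlt(G)=2$ if and only if $G$ has no cycles, this immediately gives $\mlt(Gr_{k_1,k_2}) \geq 3$.

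For the upper bound, I would invoke Theorem \ref{thm:core}: it suffices to show that the $3$-core of $Gr_{k_1,k_2}$ is empty, i.e.\ that iteratively deleting vertices of degree less than $3$ eventually exhausts the whole graph. The key combinatorial observation is that \emph{every nonempty subgraph $H$ of $Gr_{k_1,k_2}$ contains a vertex of degree at most $2$ in $H$}. Once this observation is in hand, the $3$-core construction can always proceed, so it must terminate with the empty graph. Combined with Theorem \ref{thm:core}, this yields $\mlt(Gr_{k_1,k_2}) \leq 3$.

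To prove the observation, I would consider the lexicographically smallest vertex $(i^*,j^*) \in V(H)$, meaning $i^*$ is minimal among first coordinates in $V(H)$ and $j^*$ is minimal among those $j$ with $(i^*,j)\in V(H)$. In the full grid its only possible neighbors are $(i^*\pm 1, j^*)$ and $(i^*, j^*\pm 1)$; but by lex-minimality both $(i^*-1, j^*)$ and $(i^*, j^*-1)$ lie outside $V(H)$. Hence at most two neighbors of $(i^*,j^*)$ survive in $H$, so $\deg_H(i^*,j^*)\leq 2$.

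I do not foresee a substantive obstacle: the lower bound is immediate, and the upper bound reduces to the lex-smallest-vertex argument above, which is a one-line consequence of the planar grid structure. The only thing to double-check is that the core-deletion procedure really terminates at the empty graph, but this is immediate once we know that at every stage some vertex of degree $<3$ exists, which is exactly what the observation provides.
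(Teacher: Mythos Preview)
Your proof is correct and follows essentially the same approach as the paper: the lower bound comes from the existence of a cycle, and the upper bound comes from Theorem~\ref{thm:core} via the observation that $Gr_{k_1,k_2}$ has empty $3$-core. The only cosmetic difference is that the paper verifies the empty $3$-core by successively peeling off corner vertices of the shrinking grid, whereas your lex-minimum argument gives a cleaner one-line certificate that every nonempty subgraph has a vertex of degree at most $2$.
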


\begin{proof}
First of all,  $\mlt(Gr_{k_{1},k_{2}}) \geq 3$, since $Gr_{k_{1},k_{2}}$ contains a cycle.
On the other hand, ${\rm rank}(Gr_{k_{1},k_{2}})  \leq 3$, since $Gr_{k_{1},k_{2}}$
has empty $3$-core.  This can be seen by removing the corner vertices, which successively
leaves a new vertex of degree $2$.  Hence, 
$$3 \leq \mlt(Gr_{k_{1},k_{2}}) \leq {\rm rank}(Gr_{k_{1},k_{2}}) \leq 3$$
completes the proof.  
\end{proof}


Another well-known graph operation preserving rank is \emph{edge-splitting}.

\begin{thm}[Edge splitting]\cite[Theorem 11.1.7] {Whiteley1996}  Let $G=(V,E)$ 
be a graph such that $\rank(G) \leq n$, and let $e=\{v_1, v_2\} \in E$. 
Let $G'$ be a graph obtained from $G$ by removing $e$ and then adding 
a new vertex $v'$ such that $v'$ is attached to the vertices 
$v_1$ and $v_2$ and at most $n-2$ other vertices in $V$.  Then ${\rm rank}(G') \leq n$, and, in particular $\mlt(G') \leq n$.
\end{thm}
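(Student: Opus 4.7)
The plan is to combine Theorem \ref{thm:equiv} with the Jacobian description of $\ca(n-1)$ from Section \ref{sec:crt} and argue by specialization. Since $\mlt(G')\le\rank(G')$ by Theorem \ref{thm:uhler}, it is enough to show $E(G')$ is independent in $\ca(n-1)$. By Proposition \ref{prop:subgraph} I may add further edges from $v'$ until it has exactly $n$ neighbors $v_1,v_2,u_3,\dots,u_n$; the general case (with fewer added edges) follows by passing to a subgraph. Writing $P' = (P,\bfp_{v'})$, the goal is then to show that the columns of $J(f,P')$ indexed by $E(G')$ are linearly independent for generic $P'\in\rr^{(n-1)\times(m+1)}$.

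The key idea is to work at a carefully chosen \emph{non-generic} point and then invoke lower semicontinuity of matrix rank. I would specialize $\bfp_{v'} = \lambda\bfp_{v_1}+(1-\lambda)\bfp_{v_2}$ for some fixed $\lambda\notin\{0,1\}$, while keeping the other $\bfp_i$'s generic. A direct calculation in the block form of $J(f,\cdot)$ then shows that $\tfrac{1}{1-\lambda}\cdot(\text{col.\ for } v'v_1) + \tfrac{1}{\lambda}\cdot(\text{col.\ for } v'v_2)$ has vanishing $v'$-block and elsewhere agrees with the column of $J(f,P)$ corresponding to the deleted edge $e=v_1v_2$. Performing this as a rank-preserving column operation replaces the column for $v'v_1$ by this combination.

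After the operation the columns of $J(f,P')$ split into two groups. The first group consists of $|E|$ columns indexed by $E$ (one being the new combined column, which plays the role of $e$); each has zero $v'$-block, so by hypothesis on $E$ they are linearly independent inside the submatrix obtained by deleting the $v'$-block rows. The second group consists of the $n-1$ columns for $v'v_2,v'u_3,\dots,v'u_n$, whose $v'$-blocks are $\lambda(\bfp_{v_1}-\bfp_{v_2})$ and $\bfp_{v'}-\bfp_{u_j}$ for $j\ge 3$. I would verify that these $n-1$ vectors in $\rr^{n-1}$ are linearly independent for generic $\bfp_{v_1},\bfp_{v_2},\bfp_{u_j}$ by exhibiting a single test configuration, for instance $\bfp_{v_1}=\bfe_1$, $\bfp_{v_2}=-\bfe_1$, $\bfp_{u_j}=-\bfe_{j-1}$, which yields an upper-triangular $(n-1)\times(n-1)$ matrix with diagonal $(2\lambda,1,\dots,1)$. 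Since the corresponding minor is a polynomial in the entries of $P$ that is nonzero at this point, it is nonzero generically.

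Combining the two observations, any dependence among the transformed columns must have zero coefficients on the second group (seen from the $v'$-block) and hence zero coefficients on the first (by the hypothesis on $E$). Therefore $J(f,P')$ attains rank $|E(G')|$ at the specialized point, and by lower semicontinuity of rank the same bound holds at generic $P'$, showing $E(G')$ is independent in $\ca(n-1)$. The main technical obstacle is finding the correct column-operation identity that lets two "split" edges stand in for the deleted edge $e$ under the collinear specialization of $\bfp_{v'}$; the residual genericity check of the $v'$-blocks and the semicontinuity step are then routine within the framework of Section \ref{sec:crt}.
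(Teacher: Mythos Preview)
Your argument is correct. Note, however, that the paper does not supply its own proof of this statement: it is quoted verbatim as \cite[Theorem~11.1.7]{Whiteley1996} and simply invoked via the translation of Theorem~\ref{thm:equiv}. What you have written is essentially the classical proof of the Henneberg $1$-extension (edge split) for the generic rigidity matroid, carried out in the Jacobian representation $J(f,\cdot)$ introduced in Section~\ref{sec:crt}: one degenerates the new vertex onto the line through $v_1$ and $v_2$, recovers the deleted edge as a column combination of the two split edges, and checks that the remaining $n-1$ columns incident to $v'$ have independent $v'$-blocks. Your reduction to the case of exactly $n$ neighbours via Proposition~\ref{prop:subgraph} is fine (the boundary case $\#V<n$ being trivial since then $\rank(G')\le\#V+1\le n$), and the semicontinuity step is the standard passage from a special to a generic point. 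So there is nothing to compare against in the paper itself; your write-up simply fills in the cited reference with the expected argument.
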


\begin{ex} \label{ex:lattice}
Consider the lattice graph $L_{(2,4)}$ pictured in 
Figure \ref{fig:edgeSplittingEx1}. The graph $L_{(2,4)}$ has 
tree-width 4 and contains the complete graph on 4 vertices, 
therefore $4 \leq \mlt(L_{(2,4)}) \leq 5$.  Denote the graph 
pictured in Figure \ref{fig:edgeSplittingEx2} by $G$. The graph $G$ has 
an empty $4-core$, thus $\mlt(G) \leq 4$.  We can obtain $L_{(2,4)}$ by 
removing the edge $\{2,5\}$ and adding the vertex 8 and the 
edges $\{1,8\}$, $\{2, 8\}$, $\{5,8\}$, and $\{7,8\}$, thus 
$L_{(2,4)}$ can be obtained from $G$ through an edge splitting and we have $\mlt(L_{(2,4)})=4$.
\end{ex}

\begin{figure}
\centering
\begin{minipage}{.5\textwidth}
  \centering
  \includegraphics[width=.4\linewidth]{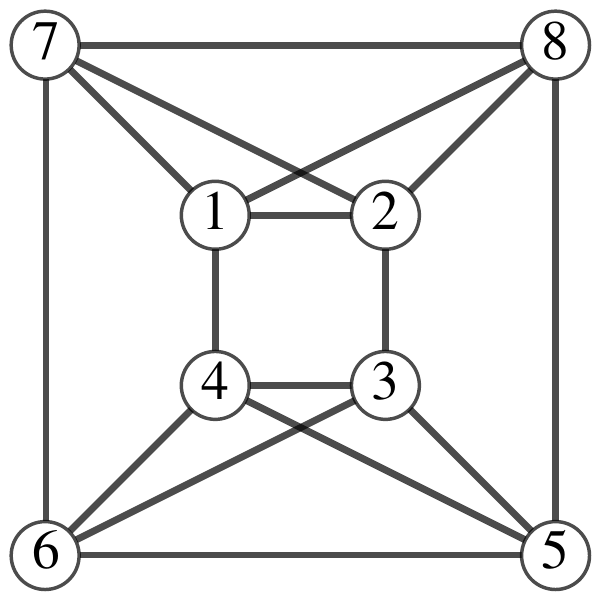}
  \captionof{figure}{The lattice graph $L_{(2,4)}$.}
  \label{fig:edgeSplittingEx1}
\end{minipage}%
\begin{minipage}{.5\textwidth}
  \centering
  \includegraphics[width=.4\linewidth]{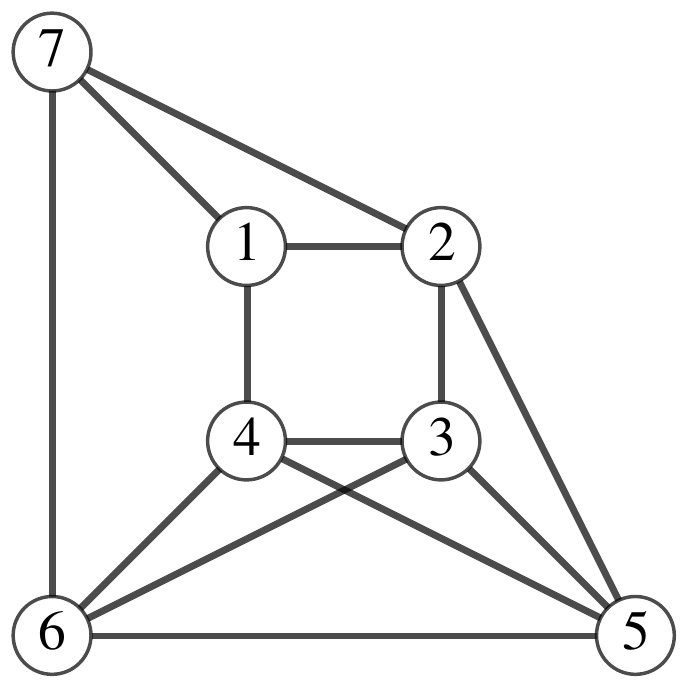}
  \captionof{figure}{Graph with $\mlt(G) \leq 4$}
  \label{fig:edgeSplittingEx2}
\end{minipage}
\end{figure}

Via a more advanced application of results of combinatorial rigidity theory, we can deduce the following
bound on the rank of any planar graph.

\begin{cor}
Let $G$ be a planar graph.  Then $\mlt(G) \leq 4$. 
\end{cor}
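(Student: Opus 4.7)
The plan is to combine Theorem \ref{thm:uhler} with Theorem \ref{thm:equiv} to reduce the problem to showing that the edge set of every planar graph is an independent set in the $3$-dimensional generic rigidity matroid $\mathcal{A}(3)$. Indeed, if $E(G)$ is independent in $\mathcal{A}(3)$, then $\rank(G) \leq 4$, and hence $\mlt(G) \leq \rank(G) \leq 4$.

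By Proposition \ref{prop:subgraph}, to show that every planar graph has its edge set independent in $\mathcal{A}(3)$, it suffices to establish this for \emph{maximal} planar graphs, i.e.\ triangulations of the $2$-sphere. A triangulation $G'$ on $v$ vertices has exactly $3v-6$ edges, which matches the Laman-type bound $\#V'(n-1)-\binom{n}{2} = 3v - 6$ from Theorem \ref{thm:dimbound} with $n = 4$. Therefore $E(G')$ being independent in $\mathcal{A}(3)$ is equivalent to $E(G')$ being a \emph{basis}, i.e.\ to $G'$ being isostatic in dimension $3$.

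The heart of the argument is then an appeal to Gluck's classical theorem: the $1$-skeleton of any triangulated convex polytope in $\mathbb{R}^3$ is infinitesimally rigid (this follows from Cauchy's rigidity theorem together with a convexity/polytope argument), and every combinatorial triangulation of the sphere can be realized as such a $1$-skeleton by Steinitz's theorem. Infinitesimal rigidity at a single realization forces generic rigidity, so every planar triangulation is rigid in $\mathcal{A}(3)$. Combined with the edge count matching $3v-6$, this makes $E(G')$ a basis of $\mathcal{A}(3)$.

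To finish, given an arbitrary planar graph $G$, embed it in the plane and add edges until obtaining a triangulation $G'$ of the sphere containing $G$ as a spanning subgraph. Then $E(G) \subseteq E(G')$ is independent in $\mathcal{A}(3)$ by Proposition \ref{prop:subgraph}, so $\rank(G) \leq 4$, and Theorem \ref{thm:uhler} yields $\mlt(G) \leq 4$. The main obstacle is the invocation of Gluck's theorem, since its proof requires genuinely three-dimensional geometric input (Cauchy/Steinitz) that cannot be obtained from the purely combinatorial tools developed so far in the paper; everything else in the proof is a direct application of the rigidity-matroid dictionary already established.
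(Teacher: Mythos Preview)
Your proof is correct and follows essentially the same route as the paper: reduce to maximal planar graphs via Proposition~\ref{prop:subgraph}, realize them as boundaries of simplicial $3$-polytopes via Steinitz, invoke infinitesimal rigidity of such polytopes to get generic rigidity in $\mathcal{A}(3)$, and combine with the edge count $3v-6$ to conclude isostaticity and hence independence. Two small points worth tightening: the infinitesimal rigidity of simplicial convex polytopes is Dehn's theorem (Cauchy gives only global rigidity, which is not enough here), and the paper separately handles $\#V\le 3$ and explicitly checks $3$-connectedness of maximal planar graphs before applying Steinitz --- your ``spanning subgraph'' phrasing implicitly assumes $\#V\ge 4$.
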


This proof is essentially due to Gluck \cite{Gluck1975} and depends on
 Dehn's \cite{Dehn1916} strengthening of Cauchy's theorem.

\begin{proof}
Every planar graph is a subgraph of a maximal planar graph, that is a planar graph
where it is not possible to add any further edges and maintain planarity.
By Proposition \ref{prop:subgraph} it suffices to prove the bound for such
maximal subgraphs.  The theorem is clearly true if $\#V \leq 3$, so assume 
that $\#V \geq 4$.  

Now, every maximal planar graph with $\#V \geq 4$ is $3$-connected.
Indeed, if a graph were not $3$-connected, $2$ vertices could be removed from $G$ leaving a disconnected graph, 
then an edge could be added from one of the components to  another without disrupting 
the planarity property.  Thus, if a planar graph is not $3$-connected, it is not maximal.

Every $3$-connected planar graph is the edge graph of a simplicial convex 
 polytope via Steinitz Theorem (see \cite[Ch.~3]{Ziegler1995}).
Dehn's theorem  \cite{Dehn1916} implies that the framework of any $3$-dimensional simplicial convex 
polytope is infinitesimally rigid in three dimensions, and hence the associated graph is generically rigid
in three dimensions.
Since a maximal planar graph with $m$ vertices has exactly $3m - 6$ edges, Dehn's theorem  combined
with Laman's criterion implies that any maximal planar graph is isostatic in $3$ dimensions. 
Hence, the set of edges of a maximal planar graph is an independent set in $\ca(3)$, which
implies that $\rank(G) \leq 4$.
\end{proof}



\section{Splitting Theorem}\label{sec:splitting}

In this section we prove a theorem that allows us to relate the 
rank of a graph to the rank of smaller subgraphs, at the expense of
needing to calculate the birank of some associated bipartite graphs.
The birank is the bipartite analogue of rank of a graph, and is
naturally related to the theory of \emph{bipartite rigidity} introduced
in \cite{Kalai2013}.  This splitting theorem allows us to give a number
of simple computations of $\rank(G)$ and hence gives us a simple
way to compute bounds on $\mlt(G)$.

For a bipartite graph $G=(V_{1}, V_{2},E)$  with a fixed bipartition of the vertices
$V=V_{1} \sqcup V_{2}$ where $\# V_1=m_1$, $\# V_2=m_2$ and two integers 
$r_{1}, r_{2}$, define the following linear space for generic 
points $X \in \cc^{m_1 \times r_1}$ and $Y \in \cc^{r_2 \times m_2}$:
$$L_{r_1, r_2}^{(X, Y)} := \{ X \cdot A +B \cdot Y \ : \ A \in \cc^{r_1 \times m_2}, \ B \in \cc^{m_1 \times r_2} \}.$$
Let 
$$ \phi_E \ : \cc^{m_1 \times m_2} \to \cc^{E}, \quad \quad 
\phi_{E}(\Sigma) = (\sigma_{ij})_{ij \in E}$$
be the coordinate projection that extracts the entries
 corresponding to edges of $G$.  Notice that while $\phi_G$ from 
 \eqref{eq:phiG} extracts entries corresponding to the diagonal, $\phi_E$ does not.

\begin{defn} Let $G=(V_{1}, V_{2},E)$ be  bipartite graph. 
Define the \emph{bipartite rank} of $G$, denoted $\birank(G)$, 
to be the set of all pairs of integers $(r_1, r_2)$ 
such that $\phi_E ( L_{r_1,r_2} ^{(X,Y)})= \cc^{E}$ 
for generic $X \in \cc^{m_1 \times r_1}$ and $Y \in \cc^{r_2 \times m_2}$.
\end{defn}

The case where $r_{1} = r_{2} = r$, the linear space $L_{r, r}^{(X, Y)}$
is the tangent space of the set of $m_{1} \times m_{2}$ matrices of rank $r$ at
the point $XY$.  Hence, bipartite rank in this case tells us about
independent sets in the algebraic matroid of the ideal of $(r+1)$-minors of
a generic matrix.  This matroid was studied in the context of matrix
completion problems in \cite{ KRT2013, Kiraly2012, Singer2010}.

The following proposition describes a method for constructing a 
new bipartite graph $G'$ from $G$ such that if $(r_1, r_2) \in \birank(G)$ 
then $(r_1, r_2) \in \birank(G')$.

\begin{prop}\label{prop:birank} Let $G=(V_1, V_2,E)$ be a bipartite graph 
with fixed partition $V=V_1 \sqcup V_2 $ such that $\# V_1 = m_1$ and 
$\#V_2 =m_2$. Let $(r_1, r_2) \in$ \emph{birank}$(G)$ and let $G'$ be 
a new bipartite graph obtained from $G$ by adding the vertex $v'$
 to $V_1$ and at most $r_2$ edges connecting $v'$ to other vertices 
 in $V_2$.  Then $(r_1, r_2) \in \birank(G')$.
\end{prop}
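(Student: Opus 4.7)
The plan is to exploit the block structure of $X' \cdot A' + B' \cdot Y'$ on the new graph $G'$, using the hypothesis on $G$ to control the entries at old edges and the freedom in the new row of $B'$ to control the entries at the new edges.

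Set up notation: let $V_1' = V_1 \cup \{v'\}$, and let $U \subseteq V_2$ be the (at most $r_2$) neighbors of $v'$ in $G'$, so $E' = E \sqcup E_{v'}$ with $E_{v'} = \{v'u : u \in U\}$. Take $X' \in \cc^{(m_1+1) \times r_1}$ and $Y' \in \cc^{r_2 \times m_2}$ generic; let $X \in \cc^{m_1 \times r_1}$ be the first $m_1$ rows of $X'$ (still generic) and $Y = Y'$. For any $A' \in \cc^{r_1 \times m_2}$ and $B' \in \cc^{(m_1+1) \times r_2}$, I would write $B'$ in block form with first $m_1$ rows $B \in \cc^{m_1 \times r_2}$ and last row $\bfb \in \cc^{r_2}$ (the row indexed by $v'$). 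The crucial observation is that the entries of $X' A' + B' Y'$ at positions $ij$ with $i \in V_1$ depend only on $X$, $Y$, $A'$, and $B$, not on the last row $X'_{v'}$ or on $\bfb$.

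Given a target $\bft \in \cc^{E'}$, split it as $\bft = (\bft_1, \bft_2)$ with $\bft_1 \in \cc^E$ and $\bft_2 \in \cc^{E_{v'}}$. Using $(r_1,r_2) \in \birank(G)$ applied to the generic data $X,Y$, choose $A' \in \cc^{r_1 \times m_2}$ and $B \in \cc^{m_1 \times r_2}$ so that $\phi_E(XA' + BY) = \bft_1$. By the block observation, the resulting matrix $X'A' + B'Y'$ automatically matches $\bft_1$ on $E$, no matter how $\bfb$ is chosen.

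It remains to choose $\bfb$ so that the $v'$-row of $X'A' + B'Y'$ matches $\bft_2$ on the columns indexed by $U$. That row equals $X'_{v'} A' + \bfb Y'$, so after restricting to the columns $U$ the requirement becomes the linear system
\begin{equation*}
\bfb \cdot Y'|_U = \bft_2 - X'_{v'} A'|_U,
\end{equation*}
where $Y'|_U$ is the $r_2 \times |U|$ submatrix of $Y'$ on columns $U$. Because $Y'$ is generic and $|U| \leq r_2$, the matrix $Y'|_U$ has full column rank $|U|$, so the linear map $\bfb \mapsto \bfb \cdot Y'|_U$ from $\cc^{r_2}$ to $\cc^{|U|}$ is surjective, and a solution $\bfb$ exists. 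Combining, $\phi_{E'}(X'A' + B'Y') = \bft$, which proves that $\phi_{E'}(L_{r_1,r_2}^{(X',Y')}) = \cc^{E'}$ for generic $X', Y'$, so $(r_1,r_2) \in \birank(G')$.

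There is no real obstacle here beyond bookkeeping: the argument is a direct analogue of the vertex-addition lemma for rigidity matroids (Proposition \ref{prop:vertexadd}), with the ``new row of $B'$'' playing the role of the new point and the bound $|U| \leq r_2$ ensuring that the generic $r_2 \times |U|$ matrix $Y'|_U$ has the required full column rank. The only subtlety to confirm is that genericity of $X'$ (respectively $Y'$) actually implies genericity of $X$ (respectively $Y$) in the sense needed to invoke the hypothesis on $G$, which is immediate since the restriction of a generic point on a larger space to a coordinate subspace is generic.
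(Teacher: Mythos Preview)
Your proof is correct and follows essentially the same approach as the paper: split $X'$ into its old rows $X$ and the new row, use the hypothesis on $G$ to handle the old edges via a choice of $A$ and the top block of $B'$, and then solve a linear system in the new row $\bfb$ of $B'$ against the generic submatrix $Y|_U$ to hit the targets on the new edges. Your treatment is in fact slightly more explicit than the paper's about why the final linear system is solvable (full column rank of the $r_2 \times |U|$ generic submatrix) and about the inheritance of genericity to $X$.
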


This result is essentially Lemma 3.7 of \cite{Kalai2013}.

\begin{proof} Let $X' \in \cc^{(m_1+1) \times r_1}$ and $Y \in \cc^{r_2 \times m_2}$ be generic. Write 
$$X'=\left[\begin{array}{c}X \\x\end{array}\right]$$
where $X \in \cc^{m_1 \times r_1}$ and $x \in \cc^{r_1}$.  Since $X'$ is generic, $X$ and $x$ are both generic as well.

Let $E'$ be the edge set of $G'$. Let $w' \in \cc^{E'}$, which we will write as $w'=\left(\begin{array}{c}w \\u\end{array}\right)$ where $w \in \cc^{E}$ and $u \in \cc^{E' - E}$.  Since $(r_1, r_2) \in \birank(G)$, by the definition of bipartite rank, the image $\phi_E ( L_{r_1,r_2} ^{(X,Y)})= \cc^{ E}$, and thus there exists $A \in \cc^{r_1 \times m_2}, \ B \in \cc^{m_1 \times r_2}$ such that 
$$\phi_E(X \cdot A +B \cdot Y)=w.$$

Now, note that if $b \in (\cc^{r_2})^*$, then
$$
 \left[\begin{array}{c}X \\x\end{array}\right] \cdot A + \left[\begin{array}{c}B \\b\end{array}\right] \cdot Y\\=\left[\begin{array}{c}X \cdot A + B \cdot Y \\x \cdot A + b \cdot Y\end{array}\right]. 
$$
Thus, to show surjectivity of $\phi_{E'}$, we need to find a $b \in \cc^{r_2}$ such that 
\begin{equation} \label{eq:system}
\phi_{E'-E} (x \cdot A + b \cdot Y)= u.
\end{equation}
However, the equation \eqref{eq:system} results a linear system with generic coefficients and $r_2$ unknowns (the entries of $b$).  Therefore, a solution always exists when $\# (E' - E) \leq r_2$ if $r_2 \leq m_2$, or $\# (E' - E) \leq m_2$ if $r_2>m_2$.
\end{proof}

For a bipartite graph $G$ with fixed bipartition of the vertices 
$V_{1}, V_{2}$ and two integers $r_{1}, r_{2}$, let $core_{r_{1},r_{2}}(G)$ be the graph
obtained from $G$ by repeatedly removing vertices of $G$ whenever $j \in V_{1}$ 
has degree less than or equal to $r_{2}$ or $j \in V_{2}$ has degree less than or equal to $r_1$.
Note that the $core_{r_{1},r_{2}}(G)$ is uniquely determined, despite the
fact that we have choices in the order we choose to remove vertices.
A graph is said to have empty $(r_{1},r_{2})$-core if $core_{r_{1},r_{2}}(G)$
have no vertices.  Clearly if $G$ has empty $(r_{1}, r_{2})$-core, it will have
$(r_{1}, r_{2}) \in \birank(G)$, in analogy to the relationship between
 ordinary rank of a graph and core.

\begin{ex}
A bipartite graph $G$ has empty $(1,1)$-core if and only if $G$ has no cycles.
\end{ex}

The notion of bipartite rank can help us understand the rank of an arbitrary 
(not necessarily bipartite) graph $G$. For a graph $G=(V,E)$ 
and disjoint subsets $V_{1}, V_{2} \subseteq V$, let
$G(V_{1}, V_{2})$ be the bipartite graph consisting of all edges $ij \in E(G)$
such that $i \in V_{1}$ and $j \in V_{2}$.  Let $G_{V_{1}}$ denote the induced
subgraph of vertex set $V_{1}$.

\begin{thm}[Splitting Theorem]\label{thm:splitting}
Let $G$ be a graph, $r_{1}, \ldots, r_{k}$ integers and  
$V_{1}, \ldots, V_{k}$ a partition of the vertices of
$G$, such that
\begin{enumerate}
\item for all $i$, $\rank(G_{V_{i}}) \leq r_{i}$ and 
\item for all $i \neq j$,  $(r_i, r_j) \in \birank( G(V_{i}, V_{j}))$. 
\end{enumerate}
Then $\rank(G)  \leq r_{1} + \cdots  + r_{k}$, and, in particular, $\mlt(G) \leq  r_{1} + \cdots  + r_{k}$.
\end{thm}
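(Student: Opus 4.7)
\emph{Proof proposal.} By Theorem~\ref{thm:uhler} it suffices to prove $\rank(G)\le n:=r_1+\cdots+r_k$, after which the bound on $\mlt(G)$ is immediate. Via the parametrization $g(P)=(\bfp_i\cdot\bfp_j)_{i\le j}$ from Section~\ref{sec:crt}, this is equivalent to $\phi_G\circ g:\cc^{n\times m}\to\cc^{V+E}$ being dominant, and by upper semicontinuity of the rank of a polynomial Jacobian it is enough to exhibit one $P$ at which $d(\phi_G\circ g)_P$ is surjective (exactly as in the proof of Theorem~\ref{thm:matroidrelation}). The plan is to take $P$ of block-diagonal type: split $\cc^n=\cc^{r_1}\oplus\cdots\oplus\cc^{r_k}$, write every vector as $\bfv=(\bfv^{(1)},\ldots,\bfv^{(k)})$, and for each $i\in V_s$ choose $\bfp_i$ supported in block $s$, so that the columns $(\bfp_i^{(s)})_{i\in V_s}$ form a generic matrix $P_s\in\cc^{r_s\times|V_s|}$, chosen independently across $s$.

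For this $P$, the identity $dg_P(\dot P)_{ij}=\bfp_i\cdot\dot\bfp_j+\dot\bfp_i\cdot\bfp_j$ reduces, whenever $i\in V_s$ and $j\in V_r$, to
$$
dg_P(\dot P)_{ij}=\bfp_i^{(s)}\cdot\dot\bfp_j^{(s)}+\dot\bfp_i^{(r)}\cdot\bfp_j^{(r)}.
$$
Hence the internal tangent variables $\dot\bfp_i^{(s)}$ ($i\in V_s$) influence only entries indexed by pairs inside $V_s$, while for $s\ne r$ the cross tangent variables $\dot\bfp_i^{(r)}$ ($i\in V_s$) and $\dot\bfp_j^{(s)}$ ($j\in V_r$) influence only entries on $V_s\times V_r$. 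These variable groups are pairwise disjoint and exhaust all $nm$ scalars of $\dot P$, while on the target side the sets $\{ii:i\in V_s\}\cup E(G_{V_s})$ for each $s$, together with $E(G(V_s,V_r))$ for each unordered pair $\{s,r\}$, partition the coordinates of $\cc^{V+E}$. Thus $d(\phi_G\circ g)_P$ decomposes as a direct sum of internal blocks (one per $V_s$) and cross blocks (one per pair).

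To match an arbitrary target coordinate-wise, I would invoke hypothesis~(1) combined with Theorem~\ref{thm:equiv}, which says exactly that at generic $P_s$ the internal block for $V_s$ surjects onto $\cc^{|V_s|+|E(G_{V_s})|}$. For a pair $s\ne r$, collect the cross tangent variables as $A\in\cc^{r_s\times|V_r|}$ (columns $\dot\bfp_j^{(s)}$, $j\in V_r$) and $B\in\cc^{|V_s|\times r_r}$ (rows $(\dot\bfp_i^{(r)})^T$, $i\in V_s$); the cross block then produces the $|V_s|\times|V_r|$ matrix
$$
P_s^T A+B\,P_r\ \in\ L_{r_s,r_r}^{(X,Y)},\qquad X=P_s^T,\ Y=P_r,
$$
with $X,Y$ generic, and hypothesis~(2) forces $\phi_{E(G(V_s,V_r))}$ of this space to be all of $\cc^{E(G(V_s,V_r))}$. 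Gluing the internal and cross solutions furnishes a preimage of any target, so $d(\phi_G\circ g)_P$ is surjective and $\rank(G)\le n$. The main obstacle is that the block-diagonal $P$ is highly non-generic in $\cc^{n\times m}$ (all cross-block inner products $\bfp_i\cdot\bfp_j$ with $s\ne r$ vanish), so one must argue via upper semicontinuity that surjectivity of the Jacobian at this specific $P$ lifts to generic $P$; the accompanying bookkeeping to check that the tangent-variable partition and the $\cc^{V+E}$-coordinate partition line up exactly, so that the block-wise surjections assemble into a global one, is the main technical item to spell out.
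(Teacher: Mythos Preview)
Your proposal is correct and follows essentially the same route as the paper: choose a block-diagonal $P$ with generic blocks $P_s\in\cc^{r_s\times m_s}$, observe that the differential $d(\phi_G\circ g)_P$ then decomposes into independent diagonal blocks (handled by hypothesis~(1)) and off-diagonal blocks of the form $P_s^T A + B\,P_r$ (handled by hypothesis~(2) via the definition of birank), and conclude surjectivity at this special $P$ lifts to generic $P$. The only minor point is that the internal-block surjectivity follows directly from the \emph{definition} of $\rank(G_{V_s})\le r_s$ rather than from Theorem~\ref{thm:equiv}; otherwise your argument and the paper's coincide.
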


\begin{proof} Let $m_i = \# V_i$ for all $1 \leq i \leq k$. 
Let $m= \sum_{i=1}^k m_i$ and $n= \sum_{i=1}^k r_i$.  Recall that $Sym(m,n)$ can be parameterized 
over $\cc$ as
\[ Sym(m,n)=\{ P^TP \ : \ P \in  \cc^{n \times m} \}. \]
Using this parameterization, we see that the tangent space of $Sym(m,n)$ at the point $X=P^TP$ is
\[ T_X(Sym(m,n)) = \{ P^TA+A^TP \ : A \in \cc^{n \times m}\}. \]
To show that $\dim \phi_G(Sym(m,n)) = \#V + \#E$, we will show that the differential of $\phi_G$ at X
$$(D\phi_G)_X: T_X(Sym(m,n)) \to \cc^{V+E}$$
$$(D\phi_G)_X ( \Sigma) = \phi_G(\Sigma)$$
is surjective for a particular $X \in Sym(m,n)$.  This will imply $(D\phi_G)_X$ is surjective for generic $X \in Sym(m,n)$, and consequently $\phi_G$ restricted to $Sym(m,n)$ is dominant. 

Let $X=P^TP$ where $P$ is a block diagonal matrix of the form
\[ 
P= \begin{pmatrix}
P_1 & 0 & \cdots  & 0 \\
0 & P_2 &  \cdots  & 0 \\
\vdots & \vdots & \ddots & \vdots \\
0 & 0 & \cdots  & P_k
\end{pmatrix}
\]
such that $P_i \in \cc^{r_i \times m_i}$ is generic for $1 \leq i \leq k$. 
For every $A \in \cc^{n \times m}$, write $A$ as the block matrix
\[ 
A=\begin{pmatrix}
A_{11} & A_{12} & \cdots & A_{1k} \\
A_{21} & A_{22} & \cdots &  A_{2k} \\
\vdots & \vdots  & \ddots &  \vdots  \\
A_{k1} & A_{k2} & \cdots   & A_{kk}
\end{pmatrix}
\]
where $A_{ij} \in \cc^{r_i \times m_j }$. Then $P^TA+A^TP \in T_X(Sym(m,n))$ is a symmetric block matrix where the $(i,j)$th block is $P_i^TA_{ij}+A_{ji}^TP_j$, i.e.,
\[ 
P^TA+A^TP = \begin{pmatrix}
P_1^TA_{11}+A_{11}^TP_1 & \cdots & P_1^TA_{1k}+A_{k1}^TP_k \\
\vdots  & \ddots &  \vdots   \\
P_k^TA_{k1}+A_{1k}^TP_1 &  \cdots  & P_k^TA_{kk}+A_{kk}^TP_k\end{pmatrix}. 
\]

To prove surjectivity of $(D\phi_G)_X$, let $w \in \cc^{V+E}$, which can be written in the block form,
$ w= (w_{11}, w_{12}, \ldots,  w_{kk})$
where $w_{ii} \in \cc^{V_i + E(G_i)}$ for all $1 \leq i \leq k$ and 
$w_{ij} \in \cc^{E(G(V_i, V_j))}$ for all $1 \leq i < j \leq k$. 
Since $\rank(G_{V_i}) \leq r_i$ and $P_i$ is generic, the image of the 
linear space $\{ P_i^TA_{ii} + A_{ii}^TP_i \ : A_{ii} \in \cc^{r_i \times m_i} \}$ 
under the map $\phi_{G_{V_i}}$ is $\cc^{V_i + E(G_i)}$, which means there exists a 
$A'_{ii} \in \cc^{r_i \times m_i}$ such that 
$\phi_{G_{V_i}} (P_i^TA'_{ii} + A_{ii}^{'T}P_i)=w_{ii}$. 
Furthermore, since $(r_i, r_j) \in \birank( G(V_i, V_j))$ for $i \neq j$, 
there exists $A'_{ij} \in \cc^{r_i \times m_j}$ and $A'_{ji} \in \cc^{r_j \times m_i}$ 
such that $\phi_{E(G(V_i, V_j))}(P_i^TA'_{ij} + A^{'T}_{ji}P_i)=w_{ij}$. 
Let $A' \in \cc^{n \times m}$ with $ij$-th block  $A'_{ij}$.  
Then $(D\phi_G)_X(P^TA' + A^{'T}P)=\phi_{G}(P^TA' + A^{'T}P)=w$, and we have shown 
surjectivity of the differential $(D\phi_G)_X$.
\end{proof}

Theorem \ref{thm:splitting} and repeated application of 
Proposition \ref{prop:birank} gives us the following corollary.

\begin{cor}\label{prop:emptycore}
Let $G$ be a graph, $r_{1}, \ldots, r_{k}$ integers and  
$V_{1}, \ldots, V_{k}$ a partition of the vertices of
$G$, such that
\begin{enumerate}
\item for all $i$, $\rank(G_{V_{i}}) \leq r_{i}$ and 
\item for all $i \neq j$,  $G(V_{i}, V_{j})$ has an empty $(r_i, r_j)$ core
\end{enumerate}
Then $\mlt(G)  \leq r_{1} + \cdots  + r_{k}$.
\end{cor}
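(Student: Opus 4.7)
The plan is to reduce Corollary \ref{prop:emptycore} to the Splitting Theorem (Theorem \ref{thm:splitting}) by showing that hypothesis (2) of the corollary implies hypothesis (2) of the theorem. That is, whenever a bipartite graph $H = G(V_i, V_j)$ has empty $(r_i, r_j)$-core, we must check that $(r_i, r_j) \in \birank(H)$. Once this is established, Theorem \ref{thm:splitting} yields $\rank(G) \leq r_1 + \cdots + r_k$, and then Uhler's bound $\mlt(G) \leq \rank(G)$ from Theorem \ref{thm:uhler} completes the proof.

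To prove the implication on bipartite rank, I would argue by reversing the core-reduction process. Since $core_{r_i, r_j}(H)$ is empty, there is an ordering $v_1, v_2, \ldots, v_N$ of the vertices of $H$ such that, when the vertices are removed in that order, each removed vertex $v_\ell$ either lies in $V_i$ and has degree $\leq r_j$ in the remaining graph, or lies in $V_j$ and has degree $\leq r_i$ in the remaining graph. Reading this sequence backwards builds $H$ from the empty bipartite graph by a sequence of vertex additions of exactly the type considered in Proposition \ref{prop:birank} (and in the symmetric statement obtained by swapping the roles of $V_1$ and $V_2$, which holds by the same proof).

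The induction proceeds as follows. The base case is the empty bipartite graph, for which $(r_i, r_j) \in \birank$ is vacuous, since $\phi_E$ maps into the trivial space $\cc^{\emptyset} = 0$. For the inductive step, suppose the current subgraph $H'$ already satisfies $(r_i, r_j) \in \birank(H')$, and the next vertex $v_\ell$ added lies in $V_i$ with at most $r_j$ neighbors in the current $V_j$-side; Proposition \ref{prop:birank} then transfers the bipartite rank condition to $H' \cup \{v_\ell\}$. The symmetric case, where $v_\ell$ is added to $V_j$ with at most $r_i$ neighbors, is identical after swapping the two sides and the two integers. After $N$ such steps we recover all of $H$, so $(r_i, r_j) \in \birank(H)$, as required.

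I do not expect any real obstacle here: the corollary is essentially a packaging statement. The only mildly delicate point is making sure the roles of $r_i$ and $r_j$ match the convention of Proposition \ref{prop:birank} in both directions of vertex addition, which is resolved simply by noting that the proof of Proposition \ref{prop:birank} is symmetric under exchange of the two parts of the bipartition. With that observation in place, the combination of the Splitting Theorem, the empty-core reduction, and Theorem \ref{thm:uhler} yields $\mlt(G) \leq r_1 + \cdots + r_k$.
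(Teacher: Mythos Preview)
Your proposal is correct and follows exactly the paper's approach: the paper derives the corollary by noting that an empty $(r_i,r_j)$-core yields $(r_i,r_j)\in\birank(G(V_i,V_j))$ via repeated application of Proposition~\ref{prop:birank}, and then invokes the Splitting Theorem~\ref{thm:splitting}. Your write-up simply spells out the induction behind ``repeated application'' and the appeal to Theorem~\ref{thm:uhler}, both of which are implicit in the paper's one-line justification.
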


The special case where all the $r_{i}$ are equal to one is easy to understand.

\begin{cor}\label{cor:colorcycles}
Let $G$ be a graph and $V_{1}, \ldots, V_{k}$ be a partition of the vertices
of $G$ such that
\begin{enumerate}
\item  for all $i$,  $V_{i}$ is an independent set of $G$ and
\item  for all $i \neq j$,  $G(V_{i}, V_{j})$ has no cycles.
\end{enumerate}
Then $\mlt(G)  \leq k$.
\end{cor}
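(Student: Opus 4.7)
The plan is to derive Corollary \ref{cor:colorcycles} as the special case of Corollary \ref{prop:emptycore} in which every $r_i$ equals $1$. Once this reduction is identified, the work reduces to verifying the two hypotheses of Corollary \ref{prop:emptycore} in this degenerate setting, both of which turn out to unfold immediately from definitions and from the example preceding the corollary.

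First I would check hypothesis (1) of Corollary \ref{prop:emptycore}: for each $i$, we need $\rank(G_{V_i}) \leq 1$. Since $V_i$ is independent in $G$, the induced subgraph $G_{V_i}$ has no edges, so the coordinate projection $\phi_{G_{V_i}}$ only extracts the diagonal entries. For $n = 1$, the variety $Sym(m_i,1)$ is parametrized by $(x_1,\ldots,x_{m_i}) \mapsto (x_a x_b)_{a,b}$, and on the diagonal this is $(x_1^2,\ldots,x_{m_i}^2)$, whose image has full dimension $m_i = \#V_i + \#E(G_{V_i})$. Hence $\rank(G_{V_i}) \leq 1$ as required.

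Next I would verify hypothesis (2) with $r_i = r_j = 1$: for each $i \neq j$, the bipartite graph $G(V_i,V_j)$ must have empty $(1,1)$-core. The text already records, as the example immediately following Proposition \ref{prop:birank}, that a bipartite graph has empty $(1,1)$-core if and only if it contains no cycles; this is just the observation that iteratively pruning leaves (degree-$\leq 1$ vertices on either side) empties out precisely the forests. Since each $G(V_i,V_j)$ is acyclic by hypothesis, hypothesis (2) of Corollary \ref{prop:emptycore} holds.

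Having confirmed both hypotheses with $r_1 = \cdots = r_k = 1$, Corollary \ref{prop:emptycore} yields $\mlt(G) \leq r_1 + \cdots + r_k = k$, finishing the proof. The only potential subtlety is the mild boundary check that an edgeless induced subgraph really has rank $\leq 1$ (and similarly that single-vertex $G_{V_i}$ is handled), but this is immediate from the parametric description of $Sym(m,1)$; no genuine obstacle arises.
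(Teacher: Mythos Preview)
Your proposal is correct and follows exactly the route the paper indicates: the paper introduces Corollary \ref{cor:colorcycles} with the sentence ``The special case where all the $r_{i}$ are equal to one is easy to understand,'' and your argument simply fills in the verification that, with $r_i=1$, independence of $V_i$ gives $\rank(G_{V_i})\leq 1$ and acyclicity of $G(V_i,V_j)$ gives empty $(1,1)$-core, so Corollary \ref{prop:emptycore} applies.
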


Of course, a partition of the vertices of the graph into independent sets
is a proper coloring of the graph, so we seek proper graph colorings 
where the induced subgraph on pairs of colors has no cycles.  
Such a coloring is called an acyclic coloring of a graph, and
the smallest number of colors such that a graph has an acyclic coloring
with that many colors is the acyclic coloring number of the graph \cite{Grunbaum1973}.
We
conclude with some examples illustrating the use of the splitting theorem
and its corollaries.

\begin{ex}
Consider the lattice graph $L_{(2,4)}$ from Example \ref{ex:lattice}.
The partition of the vertices $V_{1} = \{1,5\}$, $V_{2} = \{2,6\}$,
$V_{3} = \{3,7\}$, and $V_{4} = \{4,8\}$
has each $V_{i}$ an independent set in $L_{(2,4)}$ and each
bipartite graph $L_{(2,4)}(V_{i}, V_{j})$ without cycles.  This implies
that $\rank(G) \leq 4$.
\end{ex}

\begin{ex}
Consider the octahedral graph $O_{6}$, pictured in Figure \ref{fig:octa}.
\begin{figure}[h] 
\includegraphics{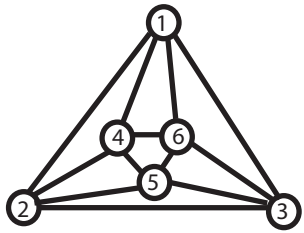}
\caption{\label{fig:octa} The octahedron graph $O_{6}$}
\end{figure}
The partition of vertices $V_{1} = \{1,4,5\}$ $V_{2} = \{2,3,6\}$ yields a splitting
that produces the bound $\rank(O_{6}) \leq 4$. Indeed, since $(O_{6})_{V_{1}}$ and $(O_{6})_{V_{2}}$ are
both trees, they have $\rank((O_{6})_{V_{1}}) = \rank((O_{6})_{V_{2}}) = 2$, and the bipartite graph
$O_{6}(V_{1}, V_{2})$ has empty $(2,2)$-core so $(2,2) \in \birank(O_{6}(V_{1}, V_{2}))$.
On the other hand $O_{6}$ has 12 edges, which by Theorem \ref{thm:dimbound} implies that $\rank(O_{6}) \geq 4$, so
the splitting proves that $\rank(O_{6}) = 4$.
\end{ex}

\begin{ex}
Consider the grid graphs $Gr_{k_{1},k_{2}}$.  Identify the vertices naturally
with $[k_{1}] \times [k_{2}]$.  Partition the vertices into three parts $V_{0}, V_{1}, V_{2}$
where
$$V_{i}  =  \{ (j_{1}, j_{2}) :  j_{1} + j_{2}  \equiv  i \mod 3 \}.$$
Clearly each $V_{i}$ is an independent set and each graph $G(V_{i}, V_{j})$ has no
cycles, so by Corollary \ref{cor:colorcycles}, $\rank(Gr_{k_{1}, k_{2}}) \leq 3$.
\end{ex}

The three preceding examples illustrating the Splitting Lemma can 
already be handled using the standard techniques
from rigidity theory from Section \ref{sec:basic}.
Let $d(G)$ be the maximal degree of the graph $G$ and $A(G)$ denote the
acylic coloring number.
In fact, Alon, McDiarmid and Reed \cite{Alon1991} showed that
$
A(G)  =  O( d(G)^{4/3})
$
and there exist graphs for which
$$
A(G)  =  \Omega\left( \frac{d(G)^{4/3}}{(\log d(G))^{1/3}} \right).
$$
On the other hand, based on the results from the previous sections,
$\rank(G) \leq d(G) + 1$, since any graph $G$ has
empty $(d(G) +1)$-core.  On the other hand, there are graphs
where $A(G) < d(G) + 1$.

\begin{ex}
Consider the graph $TGr_{k_{1}, k_{2}}$ the $k_{1} \times k_{2}$
torus grid graph.  This graph has $k_{1}k_{2}$ vertices, each of
degree $4$, and hence $2k_{1}k_{2}$ edges in total.
The core argument implies that $\rank(G) \leq 5$ whereas
from the edge count we see that $\rank(G) \geq 4$.

Suppose that $k_{1}$ is divisible by $4$  and $k_{2}$  is divisible by $3$.
Consider the $4 \times 3$ blocks of colors:
$$
B = \begin{pmatrix}
1 & 2 & 3 \\
2 & 3 & 4  \\
3 & 4 & 1  \\
4 & 1 & 2  
\end{pmatrix} 
$$
and consider the resulting coloring of $TGr_{k_{1}, k_{2}}$ 
obtained by repeating this block.
This coloring shows that $A( TGr_{k_{1}, k_{2}}) \leq 4$ since
each of the induced colorings on coloring classes $(i, i+1) \mod 4$
will consist of paths descending from the northeast to the southwest, 
that do not cross left-right boundaries 
from one $B$ to the next $B$.  Coloring classes $(i, i+2) \mod 4$
only involve edges that cross between adjacent left-right blocks,
so also do not produce cycles.
\end{ex}


\section{Weak Maximum Likelihood Threshold}\label{sec:weak}

A weaker notion of maximum likelihood threshold was also introduced in \cite{Buhl1993}
and further studied in \cite{Uhler2012}, which asks not for maximum likelihood
estimates to exist for almost all $\Sigma_{0} \in Sym(m,n) \cap \mathbb{S}^{m}_{\ge 0}$ but just for an open set of $Sym(m,n) \cap \mathbb{S}^{m}_{\ge 0}$.
This leads us to the notion of weak maximum likelihood threshold:

\begin{defn}
For each graph $G$, the \emph{weak maximum likelihood threshold},  $\wmlt(G)$, is the smallest 
$n$ such that there exists a
$\Sigma_{0}  \in Sym(m,n) \cap \mathbb{S}^{m}_{\ge 0}$
and a 
 $\Sigma \in \mathbb{S}^{m}_{>0}$ such that
$\phi_{G}(\Sigma_{0})  =  \phi_{G}(\Sigma)$.
\end{defn}

Note that because the positive definite cone $\mathbb{S}^{m}_{>0}$ is open,
the existence of a single matrix $\Sigma_{0}$ with this property guarantees an
open set of such matrices of positive measure in $Sym(m,n) \cap \mathbb{S}^{m}_{\ge 0}$.
Hence, we could also say that if $\wmlt(G) \geq n$, then maximum likelihood
estimates for the Gaussian graphical model associated to $G$ exist with
positive probability with $n$ data points.  Evaluating this probability would depend 
on having a specific distribution to draw the data from, for example
Buhl \cite{Buhl1993} calculated this for data drawn from an $\mathcal{N}(0, I_{m})$
distribution for the cycle graph.

 Clearly we have $\wmlt(G) \leq \mlt(G)$.  The two numbers can be equal, but often they are different.
Analogous to the splitting theorem for $\rank(G)$, there is also a straightforward splitting lemma
for $\wmlt(G)$.

\begin{lemma}[Splitting Lemma]\label{thm:splitting-wmlt}
Let $G$ be a graph, $r_{1}, \ldots, r_{k}$ integers and  
$V_{1}, \ldots, V_{k}$ a partition of the vertices of
$G$, such that for all $i$, $\wmlt(G_{V_{i}}) \leq r_{i}$.
Then $\wmlt(G)  \leq r_{1} + \cdots  + r_{k}$.
\end{lemma}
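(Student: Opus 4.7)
The plan is to construct, for the given graph $G$ and partition $V_1, \ldots, V_k$, an explicit witness pair $(\Sigma_0, \Sigma)$ demonstrating $\wmlt(G) \le r_1 + \cdots + r_k$ by assembling block-diagonal matrices out of the witness pairs guaranteed on each induced subgraph. Write $m_i = \#V_i$ and $m = \sum_i m_i = \#V(G)$. For each $i$, the hypothesis $\wmlt(G_{V_i}) \le r_i$ supplies matrices $\Sigma_0^{(i)} \in Sym(m_i, r_i) \cap \mathbb{S}^{m_i}_{\ge 0}$ and $\Sigma^{(i)} \in \mathbb{S}^{m_i}_{>0}$ with $\phi_{G_{V_i}}(\Sigma_0^{(i)}) = \phi_{G_{V_i}}(\Sigma^{(i)})$.

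After choosing a labeling of $V(G)$ that groups vertices block-by-block, I would set
$$\Sigma_0 := \diag\bigl(\Sigma_0^{(1)}, \ldots, \Sigma_0^{(k)}\bigr), \qquad \Sigma := \diag\bigl(\Sigma^{(1)}, \ldots, \Sigma^{(k)}\bigr),$$
and then verify the three conditions appearing in the definition of $\wmlt$. First, $\Sigma_0$ is positive semidefinite as a direct sum of PSD matrices, and $\rank(\Sigma_0) \le \sum_i r_i$ because the rank of a block-diagonal matrix is the sum of the ranks of its blocks; hence $\Sigma_0 \in Sym(m, r_1 + \cdots + r_k) \cap \mathbb{S}^m_{\ge 0}$. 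Second, $\Sigma$ is positive definite as a direct sum of positive-definite matrices, so $\Sigma \in \mathbb{S}^m_{>0}$. Third, each entry extracted by $\phi_G$ is either a diagonal entry or an entry $(i,j)$ with $ij \in E(G)$. Diagonal entries and edges $ij$ whose endpoints lie in a common block $V_\ell$ are handled by the within-block equality $\phi_{G_{V_\ell}}(\Sigma_0^{(\ell)}) = \phi_{G_{V_\ell}}(\Sigma^{(\ell)})$, while for an edge $ij \in E(G)$ with endpoints in distinct blocks both $(\Sigma_0)_{ij}$ and $(\Sigma)_{ij}$ vanish because the matrices are block-diagonal. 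Thus $\phi_G(\Sigma_0) = \phi_G(\Sigma)$.

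I do not foresee any real obstacle. The splitting lemma for $\wmlt$ is considerably easier than the splitting theorem for $\rank(G)$ (Theorem \ref{thm:splitting}) precisely because $\wmlt$ only asks for a single witness rather than a dominance statement for $\phi_G$ on the entire variety $Sym(m,n)$. In particular, no bipartite-rank analysis is needed: across-block edges are handled for free by setting the corresponding off-diagonal blocks of both $\Sigma_0$ and $\Sigma$ to zero, and this choice, impossible for rank purposes since generic data has nonzero across-block entries, is harmless here since we are free to select the specific $\Sigma_0$.
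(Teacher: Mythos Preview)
Your proposal is correct and follows exactly the paper's approach: take the witness pairs $(\Sigma_0^{(i)}, \Sigma^{(i)})$ guaranteed on each induced subgraph and form block-diagonal matrices, then observe that these satisfy the three required conditions. Your write-up is in fact more detailed than the paper's (which omits the explicit check that cross-block edges agree because both entries are zero), but the argument is the same.
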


\begin{proof}
For $i = 1, \ldots, k$, let $\Sigma_{0}^{i} \in Sym(m,r_{i})$ and
$\Sigma^{i} \in \mathbb{S}^{\#V_{i}}_{>}$ such that 
$$\phi_{G_{V_{i}}}(\Sigma_{0}^{i}) = \phi_{G_{V_{i}}}(\Sigma^{i}).$$
Then the block diagonal matrices
$$
\Sigma_{0}  =  {\rm diag}(\Sigma_{0}^{1}, \ldots, \Sigma_{0}^{k})  \quad \mbox{ and } 
\Sigma  =  {\rm diag}(\Sigma^{1}, \ldots, \Sigma^{k})
$$
satisfy $\phi_{G}(\Sigma_{0})  =  \phi_{G}(\Sigma)$,   $\Sigma \in \mathbb{S}^{m}_{>0}$, and
$\Sigma_{0}  \in Sym(m,r_{1}+ \cdots + r_{k}) \cap \mathbb{S}^{m}_{\ge 0}$.
\end{proof}

The special case where all $r_{i} = 1$ yields the following corollary where
$\chi(G)$ denotes the chromatic number of $G$.

\begin{cor}\label{cor:wmlt-chrom}
Let $G$ be a graph.  Then $\wmlt(G) \leq \chi(G)$.
\end{cor}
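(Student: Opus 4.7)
The plan is to apply the Splitting Lemma (Lemma \ref{thm:splitting-wmlt}) to a partition of $V(G)$ given by a proper coloring, with each $r_i = 1$. This is exactly the ``special case'' the corollary refers to, so the work is to verify that the chromatic partition meets the hypotheses of the lemma.

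First I would fix a proper coloring of $G$ using $\chi(G)$ colors, and let $V_1, \ldots, V_{\chi(G)}$ be the resulting color classes. By definition of a proper coloring, each $V_i$ is an independent set in $G$, so the induced subgraph $G_{V_i}$ has no edges.

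Next I would check that $\wmlt(H) \leq 1$ for any edgeless graph $H$ on $\#V_i$ vertices. For such $H$, the map $\phi_H$ only records the diagonal entries. Pick any $v \in \rr^{\#V_i}$ with all coordinates nonzero, and set $\Sigma_0 = vv^T$. Then $\Sigma_0 \in Sym(\#V_i,1) \cap \mathbb{S}^{\#V_i}_{\ge 0}$, while $\Sigma = \diag(v_1^2, \ldots, v_{\#V_i}^2) \in \mathbb{S}^{\#V_i}_{>0}$ has the same diagonal. Hence $\phi_H(\Sigma_0) = \phi_H(\Sigma)$, witnessing $\wmlt(H) \leq 1$.

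Finally, I would invoke Lemma \ref{thm:splitting-wmlt} with $k = \chi(G)$ and $r_1 = \cdots = r_k = 1$: all hypotheses hold by the previous two steps, and the conclusion is $\wmlt(G) \leq 1 + \cdots + 1 = \chi(G)$. There is no real obstacle here; the only subtlety is noticing that the ``trivial'' case $\wmlt(\text{edgeless graph}) = 1$ is the right base case, so that a chromatic partition, which is the coarsest partition into independent sets, directly feeds into the Splitting Lemma.
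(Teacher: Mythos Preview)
Your proof is correct and follows exactly the approach the paper intends: apply the Splitting Lemma with $r_1 = \cdots = r_{\chi(G)} = 1$ to the color classes of an optimal proper coloring. Your explicit verification that $\wmlt(H)\leq 1$ for an edgeless graph $H$ is a nice touch; the paper leaves this implicit (it follows from $\wmlt(H)\leq\mlt(H)=1$ for edgeless $H$, stated in the introduction).
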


So for example, every bipartite graph $G$ that has an edge satisfies $\wmlt(G) = 2$.
On the other hand, for the grid graphs $\mlt(Gr_{k_{1},k_{2}}) = 3$ so $\wmlt(G)$ 
is typically smaller that $\mlt(G)$.

At this point we know very little about the weak maximum likelihood threshold,
even for the graphs with $\wmlt(G) =2$.  Buhl showed that
if $C_{k}$ is a cycle of length $k \geq 4$, then $\wmlt(C_{k}) = 2$, while $\wmlt(C_{3}) = 3$.
  A corollary to this result
is the following necessary condition for a graph to have $\wmlt(G) = 2$.

\begin{cor}
Let $G = ([m],E)$ be a graph with $\wmlt(G) = 2$.  Then $G$ is triangle free and
there exists a cyclic order $w = w_{1}w_{2}\cdots w_{m}$ of the vertices of $G$ such that
for any subset $V \subset [m]$ such $G_{V}$ is a cycle, the induced cyclic ordering
$w_{V}$ is not a cycle ordering induced by the natural cyclic ordering from $G_{V}$.
\end{cor}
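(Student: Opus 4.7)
The plan is to combine monotonicity of $\wmlt$ under induced subgraphs with Buhl's detailed analysis of cycles from \cite{Buhl1993}. The triangle-free half is immediate: if $H = G_{V'}$ is any induced subgraph of $G$ and $(\Sigma_{0}, \Sigma)$ witnesses $\wmlt(G) \leq n$, then the principal submatrices $(\Sigma_{0}[V',V'], \Sigma[V',V'])$ witness $\wmlt(H) \leq n$, since positive-definiteness and the rank bound $\rank \leq n$ both pass to principal submatrices and, because $H$ is induced, $\phi_{H}$ picks out only diagonal and edge entries that $\phi_{G}$ already matches. Buhl's identity $\wmlt(C_{3}) = 3$ then forces any $G$ with $\wmlt(G) = 2$ to be triangle-free.

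For the cyclic order, I would extract it from a fixed witness. Choose $(\Sigma_{0}, \Sigma)$ realizing $\wmlt(G) = 2$ and factor $\Sigma_{0} = P^{T}P$ with $P = (\bfp_{1}, \ldots, \bfp_{m}) \in \mathbb{R}^{2 \times m}$; positivity of the diagonal of $\Sigma$ forces each $\bfp_{i} \neq 0$. Both $\rank(\Sigma_{0}) \leq 2$ and $\Sigma \in \mathbb{S}^{m}_{>0}$ are stable under small perturbation of $P$, so I may assume the columns of $P$ are pairwise non-parallel; then each $\bfp_{i}$ has a well-defined argument $\theta_{i} \in \mathbb{R}/2\pi\mathbb{Z}$, with all $\theta_i$ distinct. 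Let $w = w_{1}w_{2}\cdots w_{m}$ be the cyclic ordering of $[m]$ obtained by listing vertices in increasing $\theta_{i}$.

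To verify the cycle condition for this $w$, let $V \subseteq [m]$ with $G_{V}$ an induced cycle of length $\ell$; triangle-freeness gives $\ell \geq 4$. By the monotonicity argument, $(\Sigma_{0}[V,V], \Sigma[V,V])$ witnesses $\wmlt(G_{V}) \leq 2$ via the sub-configuration $\{\bfp_{v} : v \in V\}$. I would then appeal to the precise content of Buhl's cycle analysis: for $C_{\ell}$ with $\ell \geq 4$, a rank-$2$ planar datum indexed around the cycle admits a positive-definite completion respecting $\phi_{C_{\ell}}$ if and only if the angular order of the points does not coincide with the cycle's own cyclic order. Since our realization admits such a completion, the $w$-induced cyclic order $w_{V}$ on $V$ cannot coincide with the natural cyclic order of $G_{V}$, as required.

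The main obstacle is not the overall strategy but isolating the precise form of Buhl's criterion used in the last step. The excerpt records only the values $\wmlt(C_{3}) = 3$ and $\wmlt(C_{\ell}) = 2$ for $\ell \geq 4$, which give abstract existence of \emph{some} working planar realization; what is needed here is the full if-and-only-if characterization of which planar realizations work. This is implicit in Buhl's determinant analysis of the cycle covariance completion --- informally, writing $\alpha_{i} := \theta_{i+1} - \theta_{i}$ for consecutive angle gaps around the cycle, the obstruction to a positive-definite completion is exactly that $\sum_{i} \alpha_{i} = \pm 2\pi$ with all $\alpha_{i}$ of the same sign, which is precisely the ``angular order matches cyclic order'' configuration. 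Extracting this equivalence cleanly from \cite{Buhl1993} is the step that would require the most care.
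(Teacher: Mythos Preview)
Your overall strategy is exactly the paper's: fix a rank-$2$ witness $\Sigma_{0}=P^{T}P$, read off a cyclic order of $[m]$ from the angles of the columns $\bfp_{i}$, and then invoke monotonicity plus Buhl's cycle criterion on each induced cycle. The difference is a single normalization step that the paper performs and you omit, and without it your construction of $w$ can actually fail.

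The paper first observes that replacing each $\bfp_{i}$ by $\lambda_{i}\bfp_{i}$ (nonzero scalars) sends $\Sigma_{0}$ to $D\Sigma_{0}D$ and any positive definite completion $\Sigma$ to $D\Sigma D$, so existence of a completion is preserved; taking $\lambda_{i}\in\{\pm 1\}$ one may therefore assume all $\bfp_{i}$ lie in the open upper half-plane. Buhl's criterion is then stated for angles in $(0,\pi)$: a positive definite completion for $C_{k}$ exists iff the $\bfp_{i}$ are \emph{not} in the cycle's cyclic order \emph{there}. You instead take $\theta_{i}\in\rr/2\pi\zz$ and order on the full circle, with the informal obstruction ``$\sum_{i}\alpha_{i}=\pm 2\pi$ with all $\alpha_{i}$ of one sign.'' That is the wrong invariant: because of the sign-flip symmetry, the intrinsic angular data is only defined mod $\pi$, not mod $2\pi$. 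Concretely, for $C_{4}$ take unit vectors at angles $0,\ \pi/2,\ \pi+\epsilon,\ 3\pi/2+\epsilon$. Your full-circle order is $1,2,3,4$, matching the cycle, so your $w$ violates the conclusion; yet after rescaling to the upper half-plane the angles are $0,\ \pi/2,\ \epsilon,\ \pi/2+\epsilon$ with order $1,3,2,4$, and a positive definite completion exists (e.g.\ set $\sigma_{13}=\sigma_{24}=0$). Inserting the rescaling-to-upper-half-plane step before defining $w$ closes the gap, after which your argument and the paper's coincide.
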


\begin{proof}
Let $\Sigma_{0} \in Sym(m,2) \cap \mathbb{S}^{m}_{\ge 0}$.
Then $\Sigma_{0}  = P^{T}P$ where $P = (\bfp_{1}, \ldots, \bfp_{m})$
and each $\bfp_{i} \in \rr^{2}$.  Scaling the $\bfp_{i}$ by nonzero constants $\lambda_{i}$
does not change whether or not there exists a $\Sigma$ (since we could
also scale the resulting $\Sigma$) so we can assume that all the $\bfp_{i}$ are in
the upper half-plane.  
Buhl showed that for the cycle graph $C_{k}$ with edges $(i,i+1)$, there exists
a $\phi_{C_{k}}((\Sigma_{0})  =  \phi_{C_{k}}((\Sigma))$ if and only if
the vectors $\bfp_{i}$ are not in cyclic order when considered by their angles in the upper 
half plane.

Let $\Sigma_{0}  \in Sym(m,n) \cap \mathbb{S}^{m}_{\ge 0}$
and   $\Sigma \in \mathbb{S}^{m}_{>0}$ such that
$\phi_{G}(\Sigma_{0})  =  \phi_{G}(\Sigma)$.
Then if $V$ is any subset of $[m]$ and $(\Sigma_{0})_{V}$ is the submatrix of $\Sigma_{0}$ obtained by deleting all rows and columns not indexed by vertices in $V$, then 
$(\Sigma_{0})_{V}  \in Sym(\#V,n) \cap \mathbb{S}^{\#V}_{\ge 0}$
and   $\Sigma_{V} \in \mathbb{S}^{\#V}_{>0}$ 
$\phi_{G_{V}}((\Sigma_{0})_{V})  =  \phi_{G_{V}}(\Sigma_{V})$.
Hence, taking $n = 2$, 
 by Buhl's result the vectors $\bfp_{i}$ must not appear in cyclic order for
 any cycle.  A necessary condition for finding such a set of vectors
 is the existence of a permutation with the prescribed property.
\end{proof}

If such an ordering $w$ of the vertices of a triangle free graph $G$ exists, then $G$ is said to satisfy \emph{Buhl's cycle condition}.  So a graph with $\wmlt(G) = 2$ satisfies Buhl's cycle
condition, but we do not know if the converse of this statement is true.  Also
we know of no example of a triangle-free graph that does not satisfy Buhl's cycle
condition.  Note that every triangle free graph $G$ with $\chi(G) \leq 3$ satisfies
Buhl's cycle condition, by choosing a $3$-coloring and listing the vertices in blocks
according to their color.

\begin{figure}[h]\label{fig:grotsch}
\includegraphics[width=150pt]{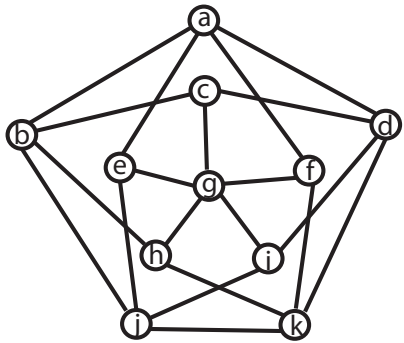} \caption{The  Gr\"otsch graph: the smallest
triangle free graph with $\chi(G) = 4$.  }
\end{figure} 
\begin{ex}
Consider the Gr\"otsch graph $G_{11}$, pictured in Figure 2, the
smallest triangle free graph with $\chi(G_{11}) = 4$.
This graph has a cyclic ordering of its vertices satisfying Buhl's cycle
condition, namely $achjbdefikg$.
On the other hand, the best upper bound on $\wmlt(G_{11})$
using Theorem \ref{thm:splitting-wmlt} comes from the
splitting $V_{1} = \{a,b,d,g,j,k\}$, $V_{2} = \{c,e,f,h,i\}$,
which yields $\wmlt(G_{11}) \leq 3$.  Is $\wmlt(G_{11}) = 2$?  
\end{ex}


\section{Score Matching Threshold}\label{sec:smt}

An alternative estimator to the maximum likelihood estimator for Gaussian graphical models is the \emph{score matching estimator} (SME) \cite{Hyvarinen2005}.  Unlike the MLE, the SME does not need to be computed iteratively, but instead is the solution to the set of linear equations.

The SME is an estimate of the concentration matrix $K = \Sigma^{-1}$. Let $G=(V,E)$ be a graph with $|V|=m$. Let $L_G$ be the following linear subspace of $\mathbb{S}_m$
 $$ L_G:= \{ K \in \mathbb{S}_m \ : \ K_{ij}=0 \text{ if } ij \notin E(G) \text{ and } i \neq j \},$$
and let $\Pi_G$ be the orthogonal projection from $\mathbb{S}_m$ onto $L_G$.  The estimating equations for the SME are

\begin{equation}\label{eq:estimatingequation}
\frac{1}{2}\cdot \Pi_G (K \Sigma_0^T + \Sigma_0 K^T)= I_m,
\end{equation}
where $\Sigma_0$ is the sample covariance matrix and $I_m$ is the $m \times m$ identity matrix.

Let $\Sigma_0=P^TP$ where $P=( \bfp_1, \ldots, \bfp_m)$ and each $\bfp_i \in \rr^{n}$.  In \cite{Forbes2015}, the authors give several equivalent conditions that are necessary and sufficient for the SME to exist, i.e. for the equation \eqref{eq:estimatingequation} to have a unique solution.  We will use the following:

\begin{prop}[\cite{Forbes2015}] Given a graph $G$, the SME exists if and only if $K=0$ is the only element of $L_G$ such that $K P^T=0$.
\end{prop}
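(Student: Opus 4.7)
The plan is to reduce the claim to a standard fact about linear endomorphisms of a finite-dimensional vector space. First, I would observe that the estimating equation \eqref{eq:estimatingequation} is linear in $K$: the map
$$T : L_G \to L_G, \quad K \longmapsto \tfrac{1}{2}\, \Pi_G\!\left(K\Sigma_0 + \Sigma_0 K\right)$$
is a well-defined linear endomorphism of $L_G$, since $K\Sigma_0 + \Sigma_0 K$ is symmetric whenever $K$ and $\Sigma_0$ are, and the right-hand side $I_m$ of \eqref{eq:estimatingequation} already lies in $L_G$. Existence and uniqueness of the SME is therefore equivalent to $T$ being bijective, and because $\dim L_G<\infty$ this is in turn equivalent to $\ker T = 0$.

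Next I would identify $\ker T$ with the condition $KP^T=0$ via a Frobenius-inner-product calculation. Equipping $\mathbb{S}^m$ with $\langle A,B\rangle = \mathrm{tr}(AB)$, the orthogonal complement $L_G^\perp$ consists precisely of the symmetric matrices whose diagonal entries and edge entries vanish, so $\Pi_G(M)=0$ if and only if $M\in L_G^\perp$. Using $\Sigma_0 = P^TP$ and cyclicity of the trace, one then computes, for any $K \in L_G$,
$$\langle K\Sigma_0 + \Sigma_0 K,\, K \rangle \;=\; 2\,\mathrm{tr}(K\Sigma_0 K) \;=\; 2\,\mathrm{tr}\!\left((KP^T)(KP^T)^T\right) \;=\; 2\,\|KP^T\|_F^{\,2}.$$

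From this identity both directions follow immediately. If $K\in\ker T$, then $K\in L_G$ while $K\Sigma_0 + \Sigma_0 K \in L_G^\perp$, so the inner product above vanishes and hence $KP^T = 0$. Conversely, if $KP^T = 0$, then $K\Sigma_0 = KP^TP = 0$ and $\Sigma_0 K = P^T(PK) = P^T(KP^T)^T = 0$, so trivially $T(K)=0$. Combining these with the opening paragraph yields the proposition.

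The only real subtlety — and the step I would write out carefully — is the orthogonality bookkeeping: one must remember that with the Frobenius inner product, off-diagonal entries of symmetric matrices are effectively weighted by $2$, so a symmetric $M$ is orthogonal to the spanning set $\{E_{ii}\}\cup\{E_{ij}+E_{ji} : ij\in E\}$ of $L_G$ exactly when its diagonal and edge entries vanish. Apart from this verification, the entire argument is driven by the single trace identity above.
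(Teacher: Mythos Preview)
Your argument is correct. The paper does not actually prove this proposition; it simply cites it from \cite{Forbes2015}, so there is no proof in the paper to compare against. That said, your self-contained argument is valid: the reduction to injectivity of the linear endomorphism $T$ on the finite-dimensional space $L_G$ is sound (and $I_m\in L_G$ as you note), and the Frobenius-trace identity
\[
\langle K\Sigma_0+\Sigma_0 K,\,K\rangle = 2\,\mathrm{tr}(K P^T P K) = 2\,\|KP^T\|_F^2
\]
cleanly identifies $\ker T$ with $\{K\in L_G : KP^T=0\}$. The orthogonality bookkeeping you flag is exactly the point that needs care, and you have it right.
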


While we do not know yet how large the difference between $\mlt(G)$ and $\rank(G)$ can be, we can show that given a graph $G$ the minimal observations $n$ needed to ensure that the SME exists almost surely is exactly equal to the rank of $G$.

\begin{defn} Let $G$ be a graph. The graph $G$ is \emph{$n$-estimable} if the score matching estimator of $\Sigma$ exists with probability one.  The \emph{score matching threshold of $G$}, denoted $\text{smt}(G)$ is the minimal $n$ such that $G$ is $n$-estimable.
\end{defn}

\begin{thm}\label{thm:smt} Let $G$ be a graph. Then
$$\smt(G)=\rank(G).$$
\end{thm}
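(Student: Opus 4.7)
The plan is to prove both inequalities simultaneously by showing that the condition characterizing SME existence (via the Forbes--Lauritzen proposition) is precisely the dual of the surjectivity condition characterizing $\rank(G)$.

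First, I would observe that the condition ``$K=0$ is the only element of $L_G$ with $KP^T=0$'' is a Zariski-open condition on $P \in \rr^{n \times m}$. Therefore $\smt(G) \leq n$ if and only if this condition holds for \emph{generic} $P$. Thus it suffices to prove that, for generic $P$, the condition ``$KP^T=0, \ K \in L_G \implies K=0$'' is equivalent to $\rank(G) \leq n$.

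Second, I would reuse the tangent space computation from the proof of Theorem~\ref{thm:splitting}. Writing $X = P^TP$, we have $T_X(Sym(m,n)) = \{P^TA + A^TP : A \in \cc^{n\times m}\}$, and $\rank(G) \leq n$ is equivalent to the differential
\[
(D\phi_G)_X : T_X(Sym(m,n)) \to \cc^{V+E}, \qquad A \mapsto \phi_G(P^TA + A^TP),
\]
being surjective for generic $P$. So we are reduced to comparing the surjectivity of $A \mapsto \phi_G(P^TA + A^TP)$ with the injectivity of $K \mapsto KP^T$ on $L_G$.

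Third, I would compute the transpose explicitly. Under the natural identification $\cc^{V+E} \cong L_G$ (where $K \in L_G$ is paired with $\phi_G(Y)$ via $\sum_i K_{ii}Y_{ii} + 2\sum_{ij \in E}K_{ij}Y_{ij} = \mathrm{tr}(KY)$ for symmetric $Y$), the transpose of the map $A \mapsto \phi_G(P^TA + A^TP)$ is
\[
L_G \to \cc^{n \times m}, \qquad K \mapsto 2PK,
\]
because $\mathrm{tr}\bigl(K(P^TA + A^TP)\bigr) = \mathrm{tr}\bigl((2PK)A\bigr)$ when $K = K^T$. By standard linear duality, surjectivity of the original map is equivalent to injectivity of this transpose, i.e. to the implication $PK = 0 \Rightarrow K = 0$ for $K \in L_G$. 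Since $K$ is symmetric, $PK = 0$ if and only if $KP^T = 0$, and this is exactly the Forbes--Lauritzen condition.

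The two characterizations coincide, hence $\smt(G) = \rank(G)$. The only subtle point — and the main thing to check carefully — is the transpose computation and the inner-product identification $\cc^{V+E} \cong L_G$; the factor of $2$ on off-diagonal entries rescales the map but does not affect its kernel, so injectivity is preserved. Everything else reduces to tangent-space bookkeeping that parallels the proof of the Splitting Theorem.
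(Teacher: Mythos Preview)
Your proof is correct and is essentially the same as the paper's, just phrased via linear duality rather than explicit matrices: the paper writes out the coefficient matrix of the system $KP^T=0$ and observes that it equals (up to scaling the diagonal columns) the submatrix of the Jacobian $J(g,P)$ indexed by $V\cup E$, so full column rank is precisely independence in $S(m,n)$ and hence $\rank(G)\le n$. Your transpose computation $\mathrm{tr}\bigl(K(P^TA+A^TP)\bigr)=2\,\mathrm{tr}\bigl((PK)^TA\bigr)$ is exactly the statement that this coefficient matrix is the transpose of the differential of $\phi_G\circ g$, so the two arguments coincide.
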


\begin{proof}
The system $K P^T=0$ is a linear system in the entries of $K$ with coefficients in the entries of $P$.  The coefficient matrix $C$ of the system $K P^T=0$ is a $mn \times (\#V+\#E)$ matrix where the columns are indexed by the vertices and edges of $G$; the system $K P^T=0$ has a unique solution if and only if the rank of $C$ is $\#V + \# E$.

 Let $M$ be the matrix obtained from the Jacobian $J(g,P)$ from Section \ref{sec:crt} by scaling the columns indexed by $ii$ by $\frac{1}{2}$.  The coefficient matrix $C$ is the submatrix of $M$ obtained by selecting the columns indexed by the vertices and edges of $G$.  Thus, the matrix $C$ has rank $\# V+ \# E$ for generic $\bfp_1, \ldots, \bfp_m$ if and only if $\{\sigma_{ii} \ : i \in [m] \} \cup \{\sigma_{ij} \ : \ ij \in E(G)\}$ is an independent set of the symmetric minor matroid $S(m,n)$.  The statement then follows by Proposition \ref{prop:symminormatroid}.
\end{proof}

We can now apply all the results in the previous sections on the rank of a graph to the score matching threshold.  For example:

\begin{cor}\label{cor:smt3} Let $G = (V,E)$ be a graph. The $\smt(G) \leq 3$ if and only if for all subgraphs $G' = (V', E')$ of $G$
\begin{equation*}
 \#E'  \leq  2  (\#V')  -  3.
\end{equation*} 
\end{cor}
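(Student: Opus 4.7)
The plan is to combine three ingredients already assembled in the paper: Theorem \ref{thm:smt}, Theorem \ref{thm:equiv}, and the already-stated bound of Theorem \ref{thm:dimbound} together with Laman's theorem (already cited in the discussion preceding Corollary \ref{cor:lamansthm}). Concretely, I would reduce the statement to a purely combinatorial-rigidity statement in dimension two, then quote Laman.

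First, I would apply Theorem \ref{thm:smt} to rewrite the conclusion $\smt(G) \leq 3$ as $\rank(G) \leq 3$. Next, by Theorem \ref{thm:equiv} (the ``$\leq$'' half of it, which also follows from Proposition \ref{prop:subgraph} applied with $n=4$ versus $n=3$), having $\rank(G) \leq 3$ is equivalent to $E$ being an independent set in the generic rigidity matroid $\mathcal{A}(2)$. So the corollary reduces to the assertion that $E$ is independent in $\mathcal{A}(2)$ if and only if every subgraph $G' = (V',E')$ satisfies $\#E' \leq 2\#V' - 3$.

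For the forward direction of this reformulated statement, I would simply invoke Theorem \ref{thm:dimbound} with $n = 3$, which gives exactly $\#E' \leq 2\#V' - 3$ for every subgraph with $\#V' \geq 2$; the cases $\#V' \leq 1$ are trivial since then $\#E' = 0$. For the reverse direction, I would invoke Laman's theorem \cite{Laman1970}, cited in the paragraph just before Corollary \ref{cor:lamansthm}, which asserts that the Laman count is not merely necessary but also sufficient for independence in $\mathcal{A}(2)$. Chaining these equivalences back through Theorems \ref{thm:equiv} and \ref{thm:smt} gives the corollary.

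There is no real obstacle: the entire argument is essentially a restatement of Corollary \ref{cor:lamansthm} with $\mlt$ replaced by $\smt$, using that Theorem \ref{thm:smt} upgrades the one-sided bound $\mlt(G) \leq \rank(G)$ of Uhler (Theorem \ref{thm:uhler}) to an equality $\smt(G) = \rank(G)$, which is what promotes the one-way implication in Corollary \ref{cor:lamansthm} into the ``if and only if'' in Corollary \ref{cor:smt3}. The only point worth flagging explicitly in the write-up is that Laman's theorem supplies the converse (which was unavailable for $\mlt$), so the biconditional is genuinely a stronger statement than the earlier corollary.
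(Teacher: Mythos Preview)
Your proposal is correct and is exactly the argument the paper has in mind: the corollary is stated immediately after Theorem~\ref{thm:smt} as a direct application of $\smt(G)=\rank(G)$ together with the Laman characterization already invoked before Corollary~\ref{cor:lamansthm}, and no separate proof is given. One small quibble: your parenthetical citing Proposition~\ref{prop:subgraph} for the equivalence ``$\rank(G)\le 3 \iff E$ independent in $\mathcal{A}(2)$'' is off---that proposition concerns subgraphs, not monotonicity in the dimension---but the equivalence follows directly from the definition of $\rank(G)$ (or from Proposition~\ref{prop:symminormatroid} and Theorem~\ref{thm:matroidrelation}), so the main line of argument is unaffected.
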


\begin{cor} Let $G$ be a graph with empty $n$-core.  Then $\smt(G) \leq n$.
\end{cor}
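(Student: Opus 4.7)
The plan is to combine Theorem \ref{thm:smt}, which identifies $\smt(G)$ with $\rank(G)$, with the vertex addition argument that was already used to bound $\mlt$ via empty cores in Theorem \ref{thm:core}. In fact, the whole point is that the hypothesis ``empty $n$-core'' controls $\rank(G)$, not just $\mlt(G)$, so the corollary reduces to a purely matroidal statement about building $G$ from the empty graph by $0$-extensions.

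First I would invoke Theorem \ref{thm:smt} to replace the inequality $\smt(G) \leq n$ by the equivalent inequality $\rank(G) \leq n$. Then I would run the standard peeling/unpeeling argument: since $G$ has empty $n$-core, there exists an ordering $v_{1}, v_{2}, \ldots, v_{m}$ of the vertices of $G$ such that, letting $G_{i}$ denote the subgraph induced by $\{v_{1}, \ldots, v_{i}\}$, each vertex $v_{i}$ has at most $n-1$ neighbors among $\{v_{1}, \ldots, v_{i-1}\}$. Such an ordering is obtained by reversing the order in which vertices are stripped off when forming the $n$-core.

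With this ordering fixed, I would proceed by induction on $i$ to show $\rank(G_{i}) \leq n$. The base case $G_{1}$ is a single vertex, which has rank $1 \leq n$. For the inductive step, $G_{i}$ is obtained from $G_{i-1}$ by adjoining the new vertex $v_{i}$ together with at most $n-1$ edges from $v_{i}$ into $G_{i-1}$, so Proposition \ref{prop:vertexadd} (Vertex Addition) applies directly and gives $\rank(G_{i}) \leq n$. Taking $i = m$ yields $\rank(G) \leq n$, hence $\smt(G) \leq n$.

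There is no real obstacle here: the corollary is essentially a restatement of Theorem \ref{thm:core} with $\mlt$ replaced by $\smt$, and both invariants are dominated by $\rank$ (actually $\smt$ equals $\rank$ by Theorem \ref{thm:smt}). The only mild point to verify is that the proof of Theorem \ref{thm:core} really produces the bound $\rank(G) \leq n$ and not just $\mlt(G) \leq n$; this is immediate from the statement of Proposition \ref{prop:vertexadd}, which is phrased at the level of rank.
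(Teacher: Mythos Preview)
Your proposal is correct and matches the paper's approach exactly: the paper states this corollary without proof, noting only that once $\smt(G)=\rank(G)$ is established (Theorem~\ref{thm:smt}), all prior rank results transfer immediately. You have simply spelled out the details of that transfer, correctly observing that the vertex-addition induction behind Theorem~\ref{thm:core} already bounds $\rank(G)$ (not just $\mlt(G)$), which is precisely what is needed.
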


\begin{cor}\label{cor:colorcycles2}
Let $G$ be a graph and $V_{1}, \ldots, V_{k}$ be a partition of the vertices
of $G$ such that
\begin{enumerate}
\item  for all $i$,  $V_{i}$ is an independent set of $G$ and
\item  for all $i \neq j$,  $G(V_{i}, V_{j})$ has no cycles.
\end{enumerate}
Then $\smt(G)  \leq k$.
\end{cor}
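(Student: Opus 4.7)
The plan is to reduce to a statement about $\rank(G)$ via Theorem \ref{thm:smt} and then mirror the derivation of Corollary \ref{cor:colorcycles}. Concretely, since $\smt(G) = \rank(G)$, it suffices to prove $\rank(G) \leq k$ under the stated hypotheses, and this falls directly out of the Splitting Theorem (Theorem \ref{thm:splitting}) with the choice $r_1 = \cdots = r_k = 1$.

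To invoke Theorem \ref{thm:splitting} with $r_i = 1$, I need to verify two things for the partition $V_1, \ldots, V_k$. First, $\rank(G_{V_i}) \leq 1$: hypothesis (1) says $V_i$ is an independent set in $G$, so the induced subgraph $G_{V_i}$ has no edges, and consequently (by the elementary classification of graphs with $\rank = 1$ recalled in Section \ref{sec:basic}) $\rank(G_{V_i}) = 1$. Second, for $i \neq j$, I need $(1,1) \in \birank(G(V_i, V_j))$: hypothesis (2) says $G(V_i, V_j)$ has no cycles, i.e.\ it is a forest, which means it has empty $(1,1)$-core by the example following the definition of $(r_1, r_2)$-core; hence $(1,1) \in \birank(G(V_i, V_j))$ by the remark preceding that example.

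With both hypotheses of Theorem \ref{thm:splitting} verified, the theorem yields $\rank(G) \leq r_1 + \cdots + r_k = k$. Combining with Theorem \ref{thm:smt} gives $\smt(G) = \rank(G) \leq k$, as desired.

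There is essentially no obstacle: this corollary is the score-matching analogue of Corollary \ref{cor:colorcycles}, and the only new ingredient beyond that earlier proof is the equality $\smt(G) = \rank(G)$ already established in Theorem \ref{thm:smt}. The proof will therefore be short, amounting to a one-line reduction followed by a direct citation of the splitting machinery.
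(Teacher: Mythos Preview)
Your proposal is correct and follows exactly the approach implicit in the paper: the paper presents this corollary as an immediate transfer of Corollary~\ref{cor:colorcycles} (itself the $r_i=1$ case of the Splitting Theorem) via the identity $\smt(G)=\rank(G)$ from Theorem~\ref{thm:smt}, and that is precisely what you do. One tiny quibble: the fact that an edgeless graph has $\rank=1$ is not literally stated in Section~\ref{sec:basic} (the Introduction records only the $\mlt$ version), but it is immediate from the definition of $\rank$, so this does not affect the argument.
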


Lauritzen stated the following conjecture about the
score matching threshold in his lecture at the 2014 Prague Stochastics
meeting.

\begin{conj}\label{conj:LF} The graph $G$ is $n$-estimable if and only if
$$\#V + \# E \leq nm  -  {n \choose 2}. $$
\end{conj}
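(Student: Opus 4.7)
The plan is to use Theorem \ref{thm:smt}, which identifies the score matching threshold with $\rank(G)$, together with the matroidal characterization of $\rank(G)$ via independence in the $(n-1)$-dimensional generic rigidity matroid (Theorem \ref{thm:equiv}). This reduces the conjecture to showing that $E(G)$ is independent in $\mathcal{A}(n-1)$ if and only if $\#V + \#E \leq nm - \binom{n}{2}$.

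For the necessity direction, I would apply the dimension bound of Theorem \ref{thm:dimbound} to the subgraph $G' = G$ itself. Provided $\#V \geq n-1$, this immediately yields $\#E \leq (n-1)m - \binom{n}{2}$, and adding $m = \#V$ to both sides gives the desired inequality; the case $\#V < n-1$ can be disposed of by direct inspection, since such graphs are $n$-estimable for trivial dimensional reasons.

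For the sufficiency direction, the natural strategy is to induct on the number of edges: attempt to show that any graph $G$ satisfying the global inequality can be built from a smaller graph with independent edge set via rank-preserving operations such as $0$-extensions (Proposition \ref{prop:vertexadd}) or edge splittings. Alternatively, one might apply the Splitting Theorem (Theorem \ref{thm:splitting}) to decompose $G$ along a suitable partition of its vertices and reduce to smaller pieces.

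The main obstacle will be that Theorem \ref{thm:dimbound} asserts the inequality must hold not only for $G$ itself but for \emph{every} subgraph $G'$ with $\#V' \geq n-1$, whereas the conjecture's hypothesis only concerns $G$. A graph globally satisfying $\#V + \#E \leq nm - \binom{n}{2}$ could contain a much denser subgraph---for instance, a $K_{n+1}$ sitting inside an otherwise sparse graph on many vertices---where the local Laman count fails, in which case $G$ would not be $n$-estimable by Proposition \ref{prop:subgraph}. I therefore expect that a proof of the sufficiency direction must either strengthen the hypothesis to require the inequality on all subgraphs, or else find a subtler argument explaining why the global bound nevertheless forces local independence. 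The strengthened hypothesis coincides with Laman's condition, which is known to be equivalent to independence in $\mathcal{A}(n-1)$ for $n=3$ by Laman's theorem (cf.\ Corollary \ref{cor:smt3}), but is only necessary, and not sufficient, for $n \geq 4$.
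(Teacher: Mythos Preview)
You have correctly isolated the crux: the sufficiency direction of the conjecture demands only a \emph{global} edge count, whereas independence in $\mathcal{A}(n-1)$ requires the Laman inequality on \emph{every} subgraph (Theorem~\ref{thm:dimbound}). Your parenthetical example of a $K_{n+1}$ sitting in an otherwise sparse graph is exactly the right intuition. But you stop short of the conclusion the paper draws: the conjecture is simply \emph{false}, and the paper does not attempt to prove it but rather gives a counterexample.

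Concretely, take $n=3$ and let $G$ be $K_{4}$ together with one pendant vertex attached by a single edge. Then $m=5$, $\#E=7$, and $\#V+\#E=12=3\cdot 5-\binom{3}{2}$, so the hypothesis of the conjecture is satisfied. But $K_{4}$ is a subgraph with $6>2\cdot 4-3$ edges, so by Corollary~\ref{cor:smt3} (equivalently, Proposition~\ref{prop:subgraph} and Theorem~\ref{thm:dimbound}) the edge set of $G$ is dependent in $\mathcal{A}(2)$; hence $\rank(G)\geq 4$ and, by Theorem~\ref{thm:smt}, $G$ is not $3$-estimable. This is precisely the paper's counterexample.

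Your final remark about the strengthened hypothesis is also in agreement with the paper: even requiring the inequality on all induced subgraphs does not rescue the conjecture for $n\geq 4$, the double banana graph being the standard obstruction to sufficiency of Laman's condition in $\mathcal{A}(3)$. So rather than outlining a proof strategy with an ``obstacle,'' you should have recognised that the obstacle is fatal and presented a counterexample.
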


The translation to rigidity the immediately provides counter examples.

\begin{ex}[Counterexample to Conjecture \ref{conj:LF}]  Let $G=(V,E)$ be the graph depicted in Figure \ref{fig:counterexample}.  Let $n=3$.  Then
$$\# V + \#E = 12 = nm  -  {n \choose 2}.$$
Thus, $G$ is conjectured to be $3$-estimable.  However, by Corollary \ref{cor:smt3}, this cannot be the case since the complete graph $K_4$ is a subgraph of $G$.
\end{ex}
\begin{figure}[h]
\includegraphics[width=100pt]{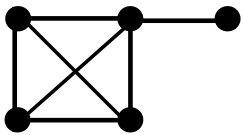} \caption{\label{fig:counterexample}Graph satisfies conditions of Conjecture \ref{conj:LF} for $n=3$, but is not $3$-estimable.  }
\end{figure}

Even with the stronger condition that the inequality
$\#V' + \# E' \leq n \#V'  -  {n \choose 2} $ for every 
induced subgraph $G' = (V', E')$ of $G$, there are known counterexamples of
graphs satisfying all of these inequalities but not being rigid.
The simplest such graph is the double banana graph, which satisfies
all these inequalities for $n = 4$ but is not a rigid graph in $\mathcal{A}(3)$.
So the Gaussian graphical model associated to the double banana graph
is not $4$-estimable.


\section{Conclusion}

The maximum likelihood threshold of a graph is an important
measure of the complexity of the Gaussian graphical model associated
to the graph.  It measures how much data is needed to calculate maximum
likelihood  estimates for the parameters of the model.
We showed that a result of Uhler implies that the maximum likelihood
threshold is closely related to combinatorial rigidity
theory, and then imported a number of results from
combinatorial rigidity theory to get new bounds on the maximum likelihood
threshold.  These new bounds significantly improve
bounds that exist in the literature, and in some cases imply 
effective ways to check for whether the MLE will exist almost surely.

We conclude here with two remaining questions. First, as discussed in the Introduction, does there exists a graph $G$ such that $\mlt(G)$ is strictly less than $\rank (G)$?  And secondly, is it possible to directly pin down the precise connection between rigidity theory and the maximum likelihood threshold? 
We provide a conjecture relating the maximum likelihood threshold to a stronger 
form of rigidity. Let $G=(V,E)$ be a graph with $\#V=m$. 
A \emph{framework} in $\rr^n$ with respect to $G$, denoted $(G,P)$, is an
$n \times m$ matrix $P$ such that the $i$th column of $P$, 
denoted $\bfp_i$, is an embedding of the $i$th vertex of $G$ into $\rr^n$.

\begin{defn} Two frameworks $(G, P)$ and $(G, Q)$ are \emph{edge-equivalent} if 
$$\| \bfp_{i} - \bfp_{j} \|_{2}^{2}=\| \bfq_{i} - \bfq_{j} \|_{2}^{2} \ \ \ \ \ \forall ij \in E(G). $$
\end{defn}

\begin{defn} Let $G=(V, E)$ be a graph with $\#V = m$.
 A framework $(G, P)$ in $\rr^n$ is 
\emph{$n$-dependently rigid} if for every edge equivalent 
framework $(G, Q)$ in $\rr^m$ the set of point $\{\bfq_{1}, \ldots, \bfq_{m}\}$ is
affinely dependent.
\end{defn}

We will say that $G$ is \emph{generically $n$-dependently rigid}  if every generic framework $(G, P)$ in $\rr^n$ is $n$-dependently rigid.

\begin{conj}\label{thm:MLEexistence} 
The maximum likelihood threshold for a graph $G$ is greater than $n$ if and only if $G$ is generically $n$-dependently rigid.
\end{conj}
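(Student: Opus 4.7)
The plan is to prove the two implications of the conjectured equivalence separately. One direction is nearly definitional once the alternative formulation of $\mlt$ is unpacked; the other requires genuinely converting a witness for non-dependent-rigidity into a positive definite completion of the partial covariance matrix.

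For the direction \emph{$G$ generically $n$-dependently rigid $\Rightarrow \mlt(G) > n$}, I would argue by contrapositive. If $\mlt(G) \le n$, then by the equivalent reformulation in the Introduction, for a generic $P \in \rr^{n\times m}$ there exist linearly independent $\bfq_1,\ldots,\bfq_m \in \rr^m$ satisfying $\|\bfp_i\|_2=\|\bfq_i\|_2$ for all $i$ and $\bfp_i\cdot \bfp_j = \bfq_i\cdot\bfq_j$ for every $ij \in E$. The identity $\|\bfv_i - \bfv_j\|_2^2 = \|\bfv_i\|_2^2+\|\bfv_j\|_2^2 - 2\bfv_i\cdot\bfv_j$ then forces $(G,P)$ and $(G,Q)$ to be edge-equivalent. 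Linear independence of $\bfq_1,\ldots,\bfq_m$ in $\rr^m$ implies the differences $\bfq_i - \bfq_m$ are linearly independent, so the affine span of the $\bfq_i$ is $(m-1)$-dimensional, i.e., they are affinely independent. This contradicts the hypothesis that the generic framework $(G,P)$ is $n$-dependently rigid.

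For the harder direction \emph{$\mlt(G) > n \Rightarrow G$ generically $n$-dependently rigid}, I would argue again by contrapositive: given a generic $(G,P)$ in $\rr^n$ together with an edge-equivalent $(G,Q)$ in $\rr^m$ whose points $\bfq_1,\ldots,\bfq_m$ are affinely independent, produce a $Q'$ certifying $\mlt(G)\le n$. The strategy is to translate: set $\bfq_i' := \bfq_i + \bft$ and demand the norm equalities $\|\bfq_i'\|_2 = \|\bfp_i\|_2$. Since translation preserves edge distances and norm-matching combined with edge-distance matching automatically gives the inner-product equalities $\bfp_i\cdot\bfp_j = \bfq_i'\cdot\bfq_j'$ on edges, such a $Q'$ certifies $\mlt(G)\le n$ as long as its columns remain linearly independent. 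Introducing $c := \|\bft\|_2^2$ linearizes the norm constraints to
\[
2\bfq_i\cdot\bft + c = \|\bfp_i\|_2^2 - \|\bfq_i\|_2^2, \quad i=1,\ldots,m,
\]
in the $m+1$ unknowns $(\bft,c)$. Affine independence of $\bfq_1,\ldots,\bfq_m$ makes the coefficient matrix have full row rank $m$, so the linear solution set is a line; intersecting it with the quadratic $c=\|\bft\|_2^2$ leaves at most two candidate $\bft$'s.

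The main obstacle is showing that a real $\bft$ exists and that the resulting $\bfq_i'$ are linearly independent. The latter is a codimension-one condition on $\bft$ and should be avoided generically, so the substantive issue is real solvability of the quadratic in the line parameter, whose discriminant could a priori be negative. I would attempt to handle this by exploiting the flexibility to replace $Q$ by any other framework in the edge-equivalent realization variety (which is typically positive-dimensional once $G$ fails to be rigid in $\rr^m$) and to rescale $P$, varying these data until a nonnegative discriminant is attained. Controlling this discriminant appears to be the technical heart of the converse, and is presumably why the statement remains a conjecture rather than a theorem in the paper.
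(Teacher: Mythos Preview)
This statement is labeled as a \emph{conjecture} in the paper, and the paper offers no proof of either direction. So there is no ``paper's own proof'' to compare against; the authors simply state the equivalence as a hope for future work.

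Your argument for the direction ``$G$ generically $n$-dependently rigid $\Rightarrow \mlt(G)>n$'' is essentially correct and is, as you say, close to definitional once the inner-product reformulation of $\mlt$ from the Introduction is unpacked. The only minor point worth tightening is the passage from ``almost all $P$'' (the measure-theoretic statement in the definition of $\mlt$) to ``some generic $P$'' (the algebraic-geometric notion in the definition of generically $n$-dependently rigid); these notions agree here because the relevant conditions are algebraic, but it deserves a sentence.

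For the converse, you have correctly isolated the genuine obstruction: after linearizing the norm constraints you are left with a one-parameter family of candidate translations $\bft$ and a quadratic condition $c=\|\bft\|_2^2$ whose discriminant need not be nonnegative, and even when it is, the resulting $\bfq_i'$ might fail linear independence. Your proposed workaround---deforming $Q$ within the edge-equivalence variety or rescaling $P$ to force a nonnegative discriminant---is a reasonable heuristic, but you have not supplied any mechanism that guarantees such a deformation succeeds. This is not a flaw in your write-up so much as an accurate diagnosis: the paper leaves this as a conjecture precisely because no one has closed this gap, and your proposal does not close it either.
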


We hope that once a precise connection between combinatorial rigidity theory and maximum likelihood estimation is established, new results on the $\mlt(G)$, guaranteed to be sharp, could be obtained.  

In addition to studying the maximum likelihood threshold, in this paper, we also looked at two related graph invariants, the weak maximum likelihood threshold and the score matching threshold. Little is understood about the weak maximum likelihood threshold, however, here we were able to show $\wmlt(G)$ is bounded above by the chromatic number of $G$, and we were able to give a necessary condition on $G$ for $\wmlt(G)=2$.  As for the score matching threshold, we showed a direct connection between the $\smt(G)$ and independent sets in the generic rigidity matroid.  While we saw that for $n=3$, conditions for independence in $\mathcal{A}(2)$ are efficient to check and some sufficient conditions for independence in $\mathcal{A}(n-1)$ are known for $n >3$, it should be noted that it is still an open problem to characterize all independent sets in $\mathcal{A}(3)$.  We hope that this connection though inspires more work on understanding the rigidity matroid for statistical applications.

\section*{Acknowledgments}

Elizabeth Gross was partially supported by the US National Science Foundation (DMS 1304167).
Seth Sullivant was partially supported by the David and Lucille Packard 
Foundation and the US National Science Foundation (DMS 0954865).  
We thank  Jan Draisma and Piotr Zwiernik
 for helpful discussions regarding the score matching estimator.

\end{document}